\def\qed{\hfill $\Box$}
\theoremstyle{definition}
\newtheorem{thm}{Theorem}[section]
\newtheorem{lem}[thm]{Lemma}
\newtheorem{cor}[thm]{Corollary}
\newtheorem{df}[thm]{Definition}
\newtheorem{rem}[thm]{Remark}
\newtheorem{ex}[thm]{Example}
\newtheorem*{prf*}{Proof}
\newtheorem{pro}[thm]{Proposition}
\newtheorem*{syuA}{Main Theorem A}
\newtheorem*{syuB}{Main Theorem B}
\newtheorem*{syuC}{Main Theorem C}
\newtheorem*{pf*}{}
\newtheorem*{lem*}{LemmaA}
\newtheorem*{lm*}{LemmaB}
\newtheorem*{not*}{Notation}
\def\mR{\mathbb{R}^3}
\def\g2{l\ge2}
\def\mpt{\mathcal{P}_4}
\def\mpo{\mathcal{P}_6}
\title{Self-similar fractals related to regular tetrahedron and imaginary cubes
\footnote{2010 Mathematics Subject Classification. 28A80, 52B10} 
\footnote{Keywords. fractal geometry, Iterated function system, symmetry, imaginary cube, connectedness} }
\author{Yuto Nakajima\\ Course of Mathematical Science, Department of Human Coexistence, \\
Graduate School of Human and Environmental Studies, Kyoto University\\
Yoshida-nihonmatsu-cho, Sakyo-ku, Kyoto, 606-8501, JAPAN}
\begin{document}
\maketitle
\begin{abstract}
We consider self-similar sets in three-dimensional Euclidean space related to a regular tetrahedron. Sierpi${\rm \acute{n}}$ski tetrahedron is one such self-similar set. In this paper,  we study the whole family of those sets. Our motivation is to obtain three-dimensional analogues of the fractal $n$-gons. In particular, we focus on the geometric properties of those sets from a viewpoint of ``imaginary cube''. An imaginary cube is a set $A$ for which there is some cube $C$ such that the projections of $A$ in the directions of the faces of $C$ equal these projections of $C$. It is already known that the Sierpi${\rm \acute{n}}$ski tetrahedron is an imaginary cube. We obtain a criterion for self-similar sets to be imaginary cubes. Furthermore, we show some properties of those sets which are imaginary cubes from a viewpoint of rotational symmetry or connectedness.
\end{abstract}

\section{Introduction and the main results}
An iterated function system (for short, IFS) is often used to construct a fractal. An IFS $\{\varphi_1,...,\varphi_k\}$ consists of a set of contracting mappings $\varphi_i : X \rightarrow X$, where $X$ is a complete metric space. It is well known (\cite{Fal}, \cite{Hut}) that there uniquely exists a non-empty compact subset $A$ of $X$ such that $A=\bigcup_{i=1}^k\varphi_i(A)$. It is called the attractor or limit set of the IFS. If each $\varphi_i$ is a similarity then the attractor is called a self similar set. Each $\varphi_i(A)$ is often called a {\it piece} of $A$.

We introduce the following self-similar sets.
\begin{df}[Fractal Regular Tetrahedron]
Let $v_1=(1,-1,1),v_2=(-1,1,1), v_3=(-1,-1,-1), v_4=(1,1,-1)$ be the vertices of a regular tetrahedron. Let $c$ be a real number in $(0,1)$ and let $P$ be an element of $SO(3)$. For all $i\in \{1,2,3,4\}$, define $f_i:\mathbb{R}^3\rightarrow{\mathbb{R}^3}$ by $f_i(x)=cPx+v_i$.
Then we call the attractor $A=A(c,P)$ of the IFS $\{f_1, f_2, f_3, f_4\}$ the Fractal Regular Tetrahedron (for short, FRT) generated by $\{f_1, f_2, f_3, f_4\}$.

\end{df}
For examples of FRTs, see Figures \ref{a1}, \ref{a2}, \ref{a3}.

We can find similar settings in \cite{BMS}. In \cite{BMS}, D. Broomhead, J. Montaldi and N. Sidorov consider framework which is similar to the above. However, they deal with self-similar sets generated by IFSs which consist of homothetic functions, that is, the attractors of IFS $\{g_1, g_2, g_3, g_4\}$, where $g_i(x)=cx+v_i$. In this paper, we consider self-similar sets related to regular tetrahedron which are generated by IFSs of affine transformations $\{cPx+v_i\}_{i=1}^4$, where $c\in(0,1), P\in SO(3)$, which have rarely been studied before. In particular, we study the symmetry, the connectedness, and some other properties  of limit sets of IFSs of the above type which are related to ``imaginary cubes''(an imaginary cube is a set $A$ for which there is some cube $C$ such that the projections of $A$ in the directions of the faces of $C$ equal these projections of $C$. See Definition \ref{ic}).

An FRT is the three-dimensional analogue of fractal $n$-gon (\cite{BanHu}, \cite{BH}, See Figures \ref{a4}, \ref{a5}, \ref{a6}). A fractal $n$-gon $A(\lambda)$ is the attractor of an IFS $\{\phi_1,...,\phi_n\}$ where, $\phi_i: \mathbb{C} \rightarrow \mathbb{C}$ is defined by $\phi_i(z)=\lambda z+b_i$, $\lambda \in \mathbb{D}^{\times}\equiv \{ z\in \mathbb{C}| 0<|z|<1\}$ and $b_1,...,b_n$ are vertices of a regular $n$-gon. Since a fractal $n$-gon possesses a rotational symmetry of order $n$, if there exists $i\in\{1, 2, ..., n-1\}$ such that $\phi_i(A(\lambda))\cap \phi_{i+1}(A(\lambda))\neq \emptyset$, then for each $i\in \{1, 2, ..., n-1\}$, we have $\phi_i(A(\lambda))\cap \phi_{i+1}(A(\lambda))\neq \emptyset$ and the fractal $n$-gon is connected (\cite{BanHu}, \cite{Hata}). Furthermore, if there exists $i\in\{1, 2, ..., n-1\}$ such that $\phi_i(A(\lambda))\cap \phi_{i+1}(A(\lambda))=\emptyset$, the fractal $n$-gon is not connected (\cite{BanHu}). Hence a fractal $n$-gon $A(\lambda)$ is connected if and only if there exists $i\in\{1, 2, ..., n-1\}$ such that $\phi_i(A(\lambda))\cap \phi_{i+1}(A(\lambda))\neq \emptyset$ (\cite{BanHu}, Theorem 2). 

But FRTs do not satisfy such properties in general (Example \ref{ex}). This example raises a question about rotational symmetries of FRTs. Here, for a set $A\subset \mR$, the rotational symmetry of $A$ is defined as the set $\{Q\in SO(3)| QA=A\}$ and it is denoted by $symA$. We define some terminologies to study the rotational symmetries of FRTs. In this paper, we denote by $C_0$ the cube whose vertices contain $v_1,v_2,v_3,v_4$ and $T_0$ the regular tetrahedron whose vertices are $v_1,v_2,v_3,v_4$. Recall that $v_1,v_2,v_3,v_4$ are used in the definition of FRTs. We say that $C\subset \mR$ is a cube if there exist $a\in \mathbb{R}$, $v\in \mR$ and $Q\in SO(3)$ such that $C=aQC_0+v$.
Put $\mathcal{P}_4:=symT_0,\mathcal{P}_6:=symC_0$. We call $\mathcal{P}_4$ the tetrahedral group and $\mathcal{P}_6$ the hexahedral group.

The following is one of the main results in this paper.
\begin{syuA}(Theorem 4.4 (2))
Let $c\in (0,1)$ and $P\in \mpo$. Then $symA(c,P)= \mathcal{P}_4$.
\end{syuA}
It is natural to study when the rotational symmetry of an FRT is equal to $\mathcal{P}_4$. The proof of the following result is based on methods used in \cite{Fal2}. We can find the results about the symmetry of self-similar sets in \cite{Fal2}. 
\begin{syuB}(Theorem 4.5)
Let $c\in(0,\frac{\sqrt{2}}{\sqrt{2}+\sqrt{3}}]$ and $P\in{SO(3)}\backslash{\mathcal{P}_6}$. Then $symA(c,P)\neq{\mathcal{P}_4}$.
\end{syuB}
Main Theorems A and B are related to the study of affine embeddings of self-similar sets. For the study of affine embeddings of self-similar sets, see \cite{A}, \cite{EKM}, \cite{FW}, \cite{FHR}.

But it is difficult to study the rotational symmetries of FRTs when $c$ is large in general. When we observe FRTs $A(c,P)$, where $c\in (0,1)$ and $P\in \mpo$, we can see that connected FRTs are characterized as $Imaginary$ $cube$s as in the following theorem which is the third main result. 
\begin{syuC}(Theorem \ref{decide})
Let $A(c,P)$ be a FRT. Then $A(c,P)$ is an imaginary cube if and only if $(c,P)\in[1/2,1)\times{\mathcal{P}_6}$.
\end{syuC}
\begin{figure}[h]
  \begin{center}
    \begin{tabular}{c}
\hspace{-1.8cm}
      \begin{minipage}{0.4\hsize}
        \begin{center}
          \includegraphics[clip, width=5cm]{H01}
         \caption{A(1/2, E). This is a well-known fractal called the Sierpi${\rm \acute{n}}$ski tetrahedron.}
     \label{a1}

        \end{center}
      \end{minipage}
\hspace{0.3cm}
   
      \begin{minipage}{0.4\hsize}
        \begin{center}
          \includegraphics[clip, width=5cm]{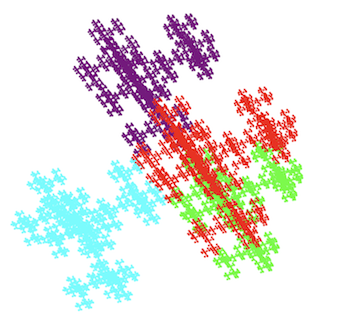}
         \caption{$A(c,P)$, where $c=1/2$, $P$ is the $\pi/2$ rotation matrix around $l_1$. Here, $l_1$ is the unique line containing the origin and $(0,0,1)$.}
       \label{a2}
    
        \end{center}
      \end{minipage}
      \hspace{0.2cm}
       \begin{minipage}{0.4\hsize}
        \begin{center}
          \includegraphics[clip, width=5cm]{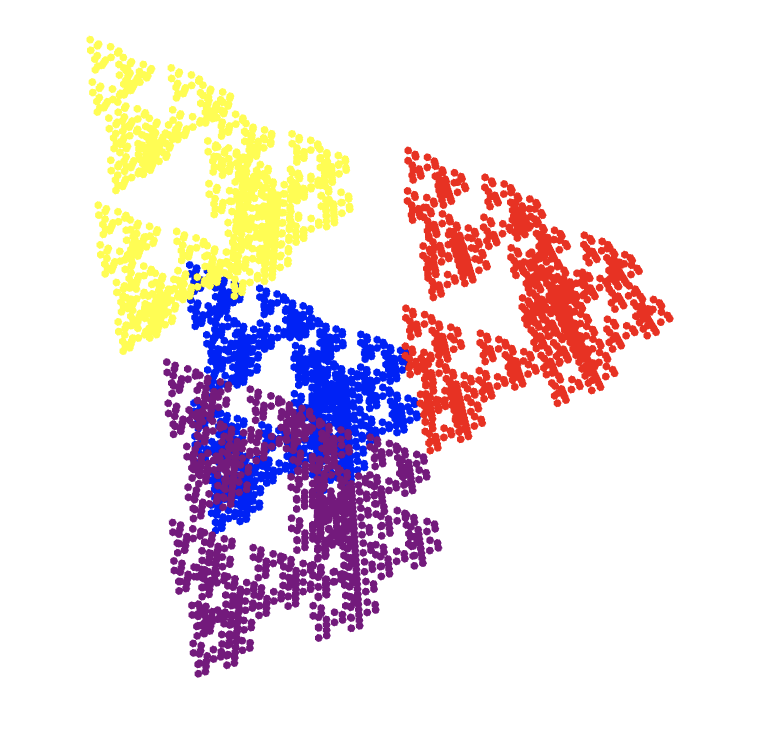}
         \caption{$A(c,P)$, where $c=1/2$, $P$ is the $\pi/3$ rotation matrix around $l_2$. Here, $l_2$ is the unique line containing the origin and $v_1$.}
     \label{a3}

        \end{center}
      \end{minipage}

    \end{tabular}

  \end{center}
\end{figure}

\begin{figure}[h]
  \begin{center}
    \begin{tabular}{c}
    \hspace{-1.8cm}
     \begin{minipage}{0.4\hsize}
        \begin{center}
          \includegraphics[clip, width=5cm]{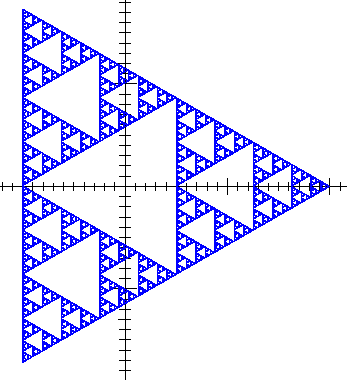}
         \caption{fractal 3-gon}
     \label{a4}

        \end{center}
      \end{minipage}

      \begin{minipage}{0.4\hsize}
        \begin{center}
          \includegraphics[clip, width=5cm]{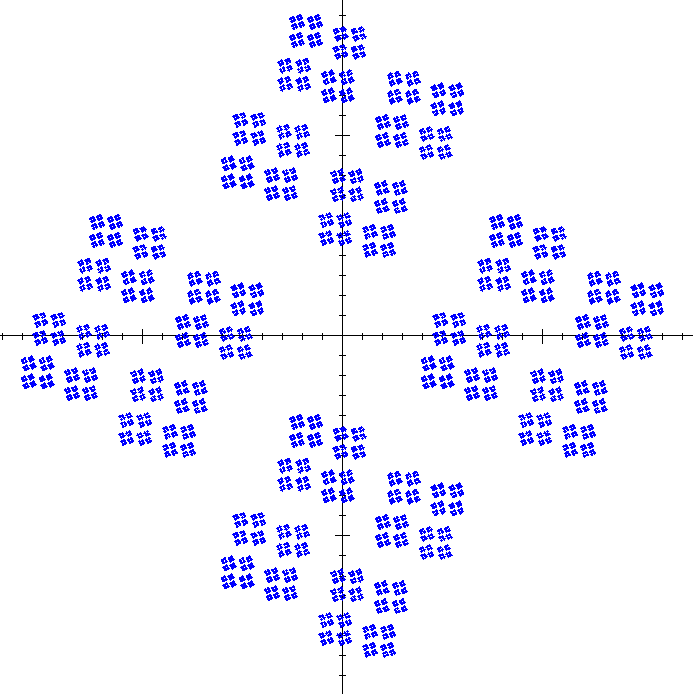}
         \caption{fractal 4-gon}
     \label{a5}

        \end{center}
      \end{minipage}

      \begin{minipage}{0.4\hsize}
        \begin{center}
          \includegraphics[clip, width=5cm]{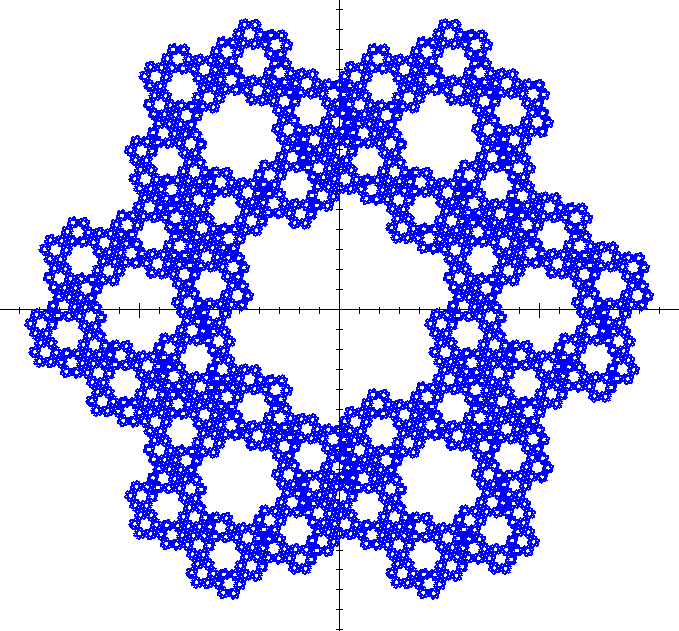}
         \caption{fractal 6-gon}
       \label{a6}
    
        \end{center}
      \end{minipage}

    \end{tabular}

  \end{center}
\end{figure}

Imaginary cube of a cube $C$ (for short IC of $C$) is the three-dimensional object which has the same square projections in three-orthogonal directions just as a cube $C$ has. (See Definition \ref{ic} and see Figure \ref{aa13} for an example.) We can see many results about ICs in \cite{T}, \cite{T2}, \cite{T3}, \cite{T4}, \cite{T5}, \cite{TT}, \cite{TT2} and \cite{TY}. An IC of a cube $C$ is often simply called an IC. In 2007, H. Tsuiki initiated the study of imaginary cubes (\cite{T}). Sierpi${\rm \acute{n}}$ski tetrahedron, which is an FRT $A(1/2, E)$ where $E$ is the identity matrix, is an imaginary cube (\cite{T}). It is interesting that Sierpi${\rm \acute{n}}$ski tetrahedron, which is a three-dimensional object, has square projections although it is ``thin'' in the sense that the Hausdorff dimension of Sierpi${\rm \acute{n}}$ski tetrahedron is equal to 2 which is less than $3={\rm dim}{\mathbb{R}^3}$. He considered other self-similar sets in three-dimensional space with the same properties as the Sierpi${\rm \acute{n}}$ski tetrahedron. For each $k\ge 2$, he investigated self-similar sets $A$ of IFSs for which the followings hold.
\begin{enumerate}
\item {\textit{Imaginary cube}}: The self-similar set $A$ is a set for which there is some cube $C$ such that the projections of $A$ in the directions of the faces of $C$ equal these projections of $C$.
\item $A$ is the union of $k^2$ copies of itself with $1/k$ scale.
\item The similarity transformations in the IFS do not include rotational parts.
\end{enumerate}
H. Tsuiki calculated the number of non-congruent self-similar sets which satisfy the condition above for each $k\le5$ (\cite{T}). 
In this paper, we consider self-similar sets generated by IFSs of similarity transformations which include rotational parts and we obtain a criterion for the self-similar sets to be imaginary cubes (Theorem \ref{decide}). 
\\  
\begin{figure}[h]
\begin{center}
\includegraphics[scale=0.5]{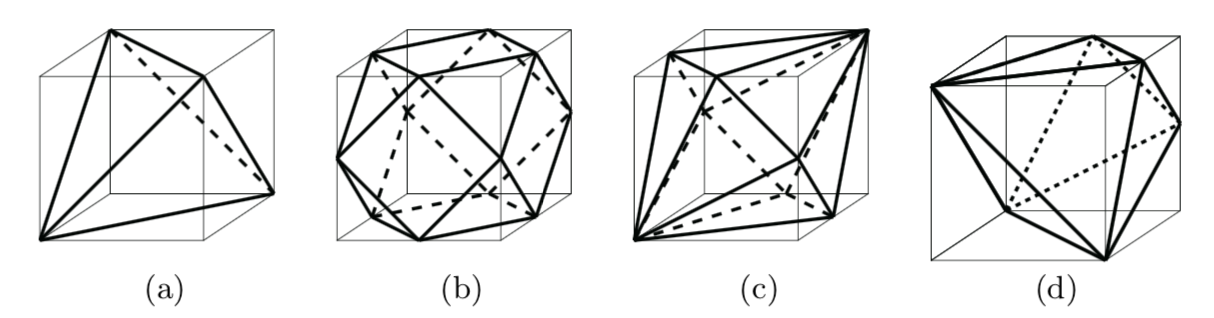}
\caption{Examples of ICs(\cite{T})}
\label{aa13}
\end{center}
\end{figure}

\subsection*{Acknowledgement.} 
The author would like to express his gratitude Professor Hiroki Sumi and Professor Hideki Tsuiki for their valuable comments. The author thank Rich Stankewitz for helpful comments. This study is supported by JSPS KAKENHI Grant Number JP 19J21038.

\section{Preliminaries}
Let $\{\varphi_1,...,\varphi_k\}$ be a general IFS on a complete metric space $X$. We define the address map as follows. Let $I=\{1,2,..., k\}$. For each $\omega=\omega_{1}\omega_{2}\omega_{3} \cdots \in I^{\infty}$, we set $\omega|_{n} := \omega_{1}\omega_{2} \cdots \omega_{n} \in I^{n}$ and $\varphi_{\omega|_{n}} := \varphi_{\omega_{1}} \circ \varphi_{\omega_{2}} \circ \cdots \circ \varphi_{\omega_{n}}$. Then, it is well known that for each $\omega\in I^{\infty}$, $\lim_{n\to \infty}\varphi_{\omega|_{n}}(x)\in X$ exists, where $x$ is an arbitrary point in $X$. Note that this limit does not depend on the choice of $x$. It is denoted by $v_{\omega}$. The address map $p \colon I^{\infty} \rightarrow X$ is defined by $\omega \mapsto v_{\omega}$. Note that $p(I^{\infty})=A$, where $A$ is the limit set generated by $\{\varphi_1,..., \varphi_k\}$. If $p(\omega)=v$, then $\omega$ is called an address of $v$.  In the following, for each finite word $\omega=\omega_1\cdots\omega_n$, we set $\overline{\omega}=\omega_1\cdots\omega_n\omega_1\cdots\omega_n\omega_1\cdots\omega_n\cdots\in I^{\infty}$.

Let $c\in(0,1)$, $P\in SO(3)$. For each $i\in \{1,2,...\}$,  we set $P^i=\overbrace{P\cdot P\cdots P}^{i \rm{times}}$ and $P^0=E$, where $E$ is the three-dimensional identity matrix. Note that $(cP)^i=c^iP^i$.

In our case, the address map has a particularly simple form as we see in the following. 
\begin{rem}
\label{ad}
The point $v$ with address $\omega_1\omega_2\cdots\omega_n\cdots\in \{1,2,3,4\}^{\infty}$ in the FRT $A(c,P)$ has the representation $v = v_{\omega_1}+\sum_{i=1}^{\infty}(cP)^{i}v_{\omega_{i+1}}$. 
\end{rem}

\begin{proof}
For each $n\ge2$, we have that $f_{\omega_{1}} \circ f_{\omega_{2}} \circ \cdots \circ f_{\omega_{n}}=v_{\omega_1}+\sum_{i=1}^{n-1}(cP)^{i}v_{\omega_{i+1}}$. 
\end{proof}
 Let $A$ be an FRT.  We set $A_i=f_i(A)$ for each $i\in \{1,2,3,4\}$. It is difficult to study whether the FRT has the rotational symmetry which acts transitively on the pieces, permuting them as vertices of regular tetrahedron, but their pieces move to other pieces in parallel as we see in the following.
\begin{lem}
\label{trns}
For all $i, j \in \{1,2,3,4\}$ with $i\neq j$, we have $A_i+v_j-v_i=A_j$.
\end{lem}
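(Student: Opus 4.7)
The plan is to unpack the definitions and observe that the lemma reduces to a one-line computation, since the translation part of $f_i$ is exactly $v_i$ and the linear part $cP$ is common to all four maps.

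First, I would write $A_i = f_i(A) = cPA + v_i$, where $cPA$ denotes the set $\{cPx : x \in A\}$. Since the same expression, with $v_i$ replaced by $v_j$, gives $A_j = cPA + v_j$, adding $v_j - v_i$ to $A_i$ yields
\begin{equation*}
A_i + v_j - v_i = (cPA + v_i) + v_j - v_i = cPA + v_j = A_j.
\end{equation*}
This argument uses only that the IFS maps share a common linear part $cP$, so the pieces differ pairwise by pure translations, namely $v_j - v_i$.

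There is no real obstacle here; the content of the lemma is precisely that the rotational part $P$ is applied uniformly to every piece, so that no genuine rotational symmetry of the FRT is required to compare two pieces — only a translation. This is the reason the statement is singled out as a preliminary observation: later on, when one studies the full rotational symmetry of $A(c,P)$ (which involves the nontrivial action of elements of $\mathcal{P}_4$ or $\mathcal{P}_6$ on the tetrahedron $T_0$), the harder task is to show that the pieces can also be matched by a rotation, not just by a translation. The present lemma isolates the trivial translational matching.
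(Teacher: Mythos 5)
Your proof is correct and arrives at the same conclusion as the paper's, but it is actually cleaner: the paper expands $A_i$ via the address representation from Remark~\ref{ad} and manipulates the resulting infinite sums, whereas you argue directly from the set identity $A_i = f_i(A) = cPA + v_i$, which makes the translation $v_j - v_i$ transparent without any appeal to addresses. Both proofs rest on the same essential observation — the linear part $cP$ is common to all four maps, so pieces differ by pure translations — so there is no substantive difference, only a small economy of notation on your side.
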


\begin{proof} Fix $c\in(0,1)$ and $P\in SO(3)$. Then

\begin{align*}
A_i+v_j-v_i
&=\{v_i+\sum_{l=1}^{\infty}(cP_1)^{l}v_{\omega_{l}}|\omega=\omega_1\omega_2\cdots\in{\{1,2,3,4\}^{\infty}}\}+v_j-v_i\\
&=\{v_j+\sum_{i=1}^{\infty}c^{l}v_{\omega_{l}}|\omega=\omega_1\omega_2\cdots\in{\{1,2,3,4\}^{\infty}}\}\\
&=A_j.
\end{align*}
We have thus proved our proposition. 
\end{proof}
The following example explains how difficult the pieces of an FRT intersect other pieces.
\begin{ex}
\label{ex}
Let $l$ be the unique line containing the origin and $v_1$. Let $P$ be the $\pi/3$ rotation matrix around $l$ and $A=A(1/2,P)$ an FRT(See Figures \ref{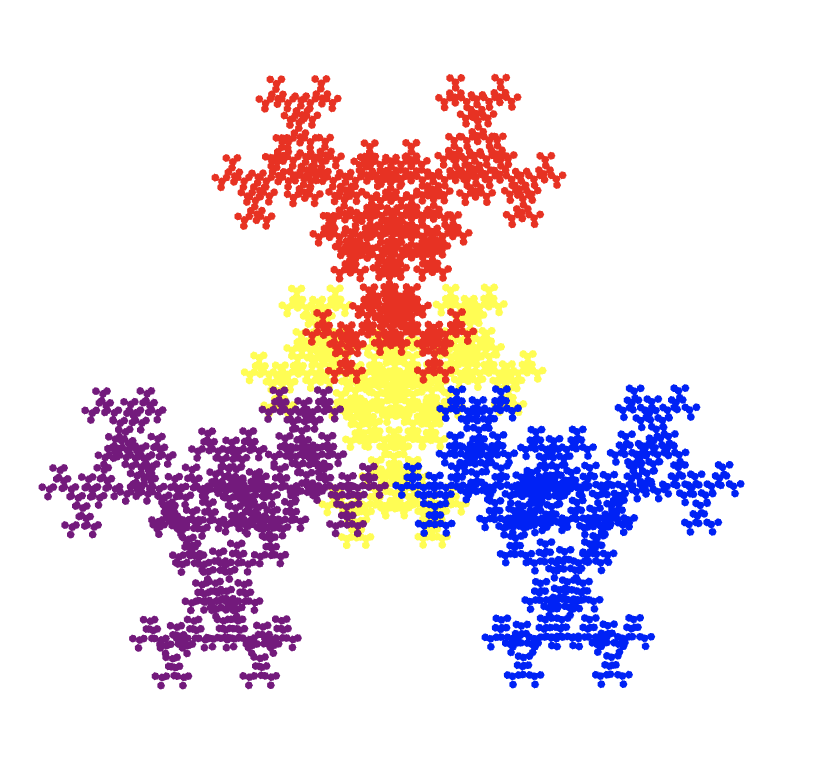}, \ref{j.png}). Then we have $A_1\cap{A_i}=\emptyset$ and $A_i\cap{A_j}\neq{\emptyset}$ for all $i,j\in\{2,3,4\}$.
\end{ex}
\begin{figure}[h]
  \begin{center}
    \begin{tabular}{c}

      \begin{minipage}{0.4\hsize}
        \begin{center}
          \includegraphics[clip, width=5cm]{i.png}
         \caption{The red piece intersects the blue one , the blue one intersects the purple one and the purple one intersects the red one.}
     \label{i.png}

        \end{center}
      \end{minipage}

   \hspace{0.2cm}
      \begin{minipage}{0.4\hsize}
        \begin{center}
          \includegraphics[clip, width=5cm]{j.png}
         \caption{The yellow piece does not intersect red one, purple one or blue one.}
       \label{j.png}
    
        \end{center}
      \end{minipage}

    \end{tabular}

  \end{center}
\end{figure}

\begin{proof}
Let $H$ be the coordinate change defined by $$H(x, y, z)=(\frac{-\sqrt{2}}{2\sqrt{3}}x+\frac{\sqrt{2}}{2\sqrt{3}}y+\frac{\sqrt{2}}{\sqrt{3}}z, \frac{-\sqrt{2}}{2}x+\frac{-\sqrt{2}}{2}y, \frac{2}{2\sqrt{3}}x+\frac{-2}{2\sqrt{3}}y+\frac{1}{\sqrt{3}}z).$$
We change coordinates of points by using the coordinate change $H$ so that 
\begin{align*}
H(v_1)=(0,0,\sqrt{3}), H(v_2)=(\frac{2\sqrt{2}}{\sqrt{3}},0,\frac{-1}{\sqrt{3}}),H(v_3)=(\frac{-\sqrt{2}}{\sqrt{3}},\sqrt{2},\frac{-1}{\sqrt{3}}), H(v_4)=(\frac{-\sqrt{2}}{\sqrt{3}},-\sqrt{2},\frac{-1}{\sqrt{3}}).
\end{align*}
In this proof, we set $h_i:=H(v_i)$, $Q:= HPH^{-1}$ and $A^{\prime}_i:=H(A_i)$ for each $i\in \{1,2,3,4\}$. Note that 
\begin{align*}
Qh_1=h_1, Q^2h_2=h_3, Q^2h_3=h_4, Q^2h_4=h_2.
\end{align*}
Put $\Delta=\bigr\{h_i-h_j|i,j\in\{1,2,3,4\}\bigl\}$. First we prove $A^{\prime}_1\cap A^{\prime}_2={\emptyset}$. Suppose that there is a point $v\in A^{\prime}_1\cap A^{\prime}_2$. By Remark \ref{ad}, the point $v\in A^{\prime}_1\cap A^{\prime}_2$ can be represented as 
\begin{align*}
v = h_{1}+\sum_{i=1}^{\infty}(\frac{1}{2})^iQ^{i}h_{\omega_{i}} = h_{2}+\sum_{i=1}^{\infty}(\frac{1}{2})^iQ^{i}h_{\tau_{i}}
\end{align*}

where $\omega_i, \tau_i \in \{1,2,3,4\}$. Thus 
\begin{align*}
h_1-h_2+\sum_{i=1}^{\infty}(\frac{1}{2})^iQ^id_i=0,
\end{align*}

where $d_i \in \Delta$. We denote by $x(v), y(v)$ and $z(v)$  the first, second and third coordinate of $v$ respectively. For example, $x(h_1)=0, y(h_3)=\sqrt{2}$. Since $z(h_1-h_2)={4}/{\sqrt{3}}$, 
\begin{align*}
z(\sum_{i=1}^{\infty}(\frac{1}{2})^iQ^id_i)=\sum_{i=1}^{\infty}(\frac{1}{2})^iz(d_i)=\frac{-4}{\sqrt{3}}.
\end{align*} 

Note that $z(d)\in\{{-4}/{\sqrt{3}}, 0, {4}/{\sqrt{3}}\}$ for all $d\in \Delta$. Hence we obtain that $d_i\in\{h_2-h_1,h_3-h_1,h_4-h_1\}$ for all $i$. Since $x(h_1-h_2)={-2\sqrt{2}}/{\sqrt{3}}$,
\begin{align*}
x(\sum_{i=1}^{\infty}(\frac{1}{2})^iQ^id_i)=\sum_{i=1}^{\infty}(\frac{1}{2})^ix(Q^id_i)=\frac{2\sqrt{2}}{\sqrt{3}}.
\end{align*}

Note that $x(Q^id_i)\le {2\sqrt{2}}/{\sqrt{3}}$ for all $i$. Hence we obtain that $x(Q^id_i)={2\sqrt{2}}/{\sqrt{3}}$ for all $i$. But this contradicts $x(Q(h_2-h_1)), x(Q(h_3-h_1)), x(Q(h_4-h_1))<{2\sqrt{2}}/{\sqrt{3}}$. Thus $A^{\prime}_1\cap A^{\prime}_2={\emptyset}$. Similarly $A^{\prime}_1\cap A^{\prime}_3={\emptyset}, A^{\prime}_1\cap A^{\prime}_4={\emptyset}$.
Next we prove $A^{\prime}_2\cap A^{\prime}_3\neq{\emptyset}$. Note that $\{h_i-h_j|i,j\in\{2,3,4\}\}$ is the set of all vertices of a regular hexagon.

\begin{align*}
p(2\overline{322443})-p(3\overline{443322})&=h_2-h_3+\frac{1}{2}Q\sum_{i=0}^{\infty}(\frac{1}{2})^{6i}(h_3-h_4)+(\frac{1}{2}Q)^2\sum_{i=0}^{\infty}(\frac{1}{2})^{6i}(h_2-h_4)\\&\qquad\qquad\hspace{0.15cm}+(\frac{1}{2}Q)^3\sum_{i=0}^{\infty}(\frac{1}{2})^{6i}(h_2-h_3)+(\frac{1}{2}Q)^4\sum_{i=0}^{\infty}(\frac{1}{2})^{6i}(h_4-h_3)\\&\qquad\qquad\hspace{0.15cm}+(\frac{1}{2}Q)^5\sum_{i=0}^{\infty}(\frac{1}{2})^{6i}(h_4-h_2)+(\frac{1}{2}Q)^6\sum_{i=0}^{\infty}(\frac{1}{2})^{6i}(h_3-h_2)\\
&=h_2-h_3+\frac{1}{2}\sum_{i=0}^{\infty}(\frac{1}{2})^{6i}(h_3-h_2)+(\frac{1}{2})^2\sum_{i=0}^{\infty}(\frac{1}{2})^{6i}(h_3-h_2)
\\&\qquad\qquad\hspace{0.15cm}+(\frac{1}{2})^3\sum_{i=0}^{\infty}(\frac{1}{2})^{6i}(h_3-h_2)+(\frac{1}{2})^4\sum_{i=0}^{\infty}(\frac{1}{2})^{6i}(h_3-h_2)\\&\qquad\qquad\hspace{0.15cm}+(\frac{1}{2})^5\sum_{i=0}^{\infty}(\frac{1}{2})^{6i}(h_3-h_2)+(\frac{1}{2})^6\sum_{i=0}^{\infty}(\frac{1}{2})^{6i}(h_3-h_2)=0.
\end{align*}

Thus $A^{\prime}_2\cap A^{\prime}_3\neq{\emptyset}$. Similarly we have $A^{\prime}_3\cap A^{\prime}_4\neq{\emptyset}$ and $A^{\prime}_2\cap A^{\prime}_4\neq{\emptyset}$.

\end{proof}

\section{A criterion for FRTs to be imaginary cubes}
We give a rigorous definition of  $Imaginary$ $Cube$ as follows.
\begin{df}[Imaginary Cube]
\label{ic}
Let $A$ be a subset of ${\mathbb{R}^3}$ and $C$ a cube. Let $S_1,S_2,S_3$ be faces which share a vertex of $C$. For all $i=1,2,3$, let $\pi_i:\mathbb{R}^3\rightarrow{L_i}$ be the orthogonal projection of $\mathbb{R}^3$ onto $L_i$, where $L_i$ is the two-dimensional vector space of $\mathbb{R}^3$ parallel to $S_i$. Then
\begin{enumerate}
\item We say that $A$ is an $Imaginary$ $Cube$ (for short, IC) of $C$ if $\pi_i(A)=\pi_i(C)$ for all ${i}=1,2,3$.
\item We say that $A$ is an IC if there exists a cube $C$ such that $A$ is an IC of $C$.
\end{enumerate}
\end{df}

\begin{rem}
\label{rem}
Let $A$ be a subset of ${\mathbb{R}^3}$ and let $C$ be a cube. If $A$ is an $IC$ of $C$, it follows that
　\begin{itemize}
\item$A\subset{C}.$
\item there exists a point of ${A}$ on each edge of $C$ .
\end{itemize}
\end{rem}
The following lemma gives a sufficient condition for imaginary cubes.
\begin{lem}
\label{ic2}
Let $\{B_i\}_{i=1}^{\infty}$ be a decreasing sequence of non-empty compact subsets of $\mathbb{R}^3$ and $A=\bigcap_{i=1}^{\infty}B_i\neq{\emptyset}$. Suppose that there exist a cube $C$ such that $B_i$ is an IC of $C$ for all $i$. Then $A$ is an IC of $C$.
\end{lem}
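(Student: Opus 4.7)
The plan is to show both inclusions $\pi_j(A) \subseteq \pi_j(C)$ and $\pi_j(C) \subseteq \pi_j(A)$ for each $j \in \{1,2,3\}$. The first inclusion is essentially immediate: by Remark \ref{rem}, each $B_i$ is contained in $C$, so $A = \bigcap_i B_i \subseteq C$, which gives $\pi_j(A) \subseteq \pi_j(C)$.

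For the reverse inclusion, I would fix $j \in \{1,2,3\}$ and an arbitrary point $x \in \pi_j(C)$, and produce a point $a \in A$ with $\pi_j(a) = x$. Since $B_i$ is an IC of $C$, we have $\pi_j(B_i) = \pi_j(C) \ni x$, hence the set
\begin{equation*}
K_i := \pi_j^{-1}(x) \cap B_i
\end{equation*}
is non-empty for every $i$. Because $\pi_j$ is continuous, $\pi_j^{-1}(x)$ is closed in $\mathbb{R}^3$, and intersecting it with the compact set $B_i$ makes $K_i$ compact. The sequence $\{K_i\}_{i=1}^\infty$ is decreasing because $\{B_i\}$ is.

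Now I would invoke the finite intersection property for nested non-empty compact sets (Cantor's intersection theorem) to conclude
\begin{equation*}
\bigcap_{i=1}^{\infty} K_i = \pi_j^{-1}(x) \cap \bigcap_{i=1}^{\infty} B_i = \pi_j^{-1}(x) \cap A \neq \emptyset.
\end{equation*}
Picking any $a$ in this intersection yields $a \in A$ with $\pi_j(a) = x$, so $x \in \pi_j(A)$. Since $x \in \pi_j(C)$ and $j$ were arbitrary, this proves $\pi_j(A) = \pi_j(C)$ for each $j$, i.e.\ $A$ is an IC of $C$.

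I do not foresee any real obstacle here; the only point requiring a bit of care is the observation that the compact set $B_i$ is preserved under the intersection with the closed slice $\pi_j^{-1}(x)$, which keeps $K_i$ compact and lets Cantor's theorem apply. Everything else is a direct unpacking of Definition \ref{ic} together with the elementary fact that $A$ is contained in each $B_i$.
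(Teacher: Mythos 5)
Your proof is correct and follows essentially the same route as the paper's own argument: both fix a point in the square face, form the decreasing sequence of non-empty compact slices $\pi_j^{-1}(x)\cap B_i$, and apply Cantor's intersection theorem to extract a preimage in $A$.
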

\begin{prf*}
Let $\pi : A \rightarrow S$ be one of the three square projections of $A$ to a square $S$. Then we have that $\pi(A)\subset S$ is trivial. Fix $x\in S$ and let $D_i = \pi^{-1}(x) \cap B_i$ for each $i = 0,1,2,...$. Since $\{B_i\}_{i=1}^{\infty}$ is a decreasing sequence of non-empty compact subsets of $\mathbb{R}^3$, We have that $\{D_i\}_{i=1}^{\infty}$ is a decreasing sequence of non-empty compact subsets of $\mathbb{R}^3$. Hence we have that $\pi^{-1}(x) \cap A=\bigcap_{i=1}^{\infty}D_i$ is not empty. Then we have that $S\subset \pi(A)$. Hence we have proved our lemma. \qed
\end{prf*}
The following is one of the main results in this paper.
\begin{thm}
\label{decide}
Let $A(c,P)$ be a FRT. Then $A(c,P)$ is an IC if and only if $(c,P)\in[1/2,1)\times{\mathcal{P}_6}$.
\end{thm}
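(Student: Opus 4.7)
The plan is to prove the two implications separately. The ``if'' direction is a direct invariance-plus-induction argument; the ``only if'' direction combines a Hausdorff-dimension bound with a delicate analysis of the convex hull's support function, and I expect the latter to be the main obstacle.

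\textbf{The ``if'' direction.} Suppose $c \in [1/2, 1)$ and $P \in \mathcal{P}_6$. I will show $A(c,P)$ is an IC of $C := \frac{1}{1-c}C_0$. Since $P \in \mathcal{P}_6$ we have $PC = C$, so $f_i(C) = cC + v_i$, and the identity $\frac{c}{1-c} + 1 = \frac{1}{1-c}$ shows $f_i(C) \subset C$ for each $i$. Set $B_n := \bigcup_{\omega \in I^n} f_\omega(C)$, a decreasing sequence of non-empty compact sets whose intersection equals $A(c,P)$. I will prove by induction on $n$ that each $B_n$ is an IC of $C$, after which Lemma \ref{ic2} concludes. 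For the base case $n = 1$, each face-projection $\pi(B_1) = \bigcup_{i=1}^{4}(c\pi(C) + \pi(v_i))$ is a union of four axis-aligned squares of side $\frac{2c}{1-c}$ centered at the four corner points $\pi(v_i) \in \{\pm 1\}^2$, and the hypothesis $c \geq 1/2$ is exactly what makes these four smaller squares cover the full face $\pi(C) = [-\frac{1}{1-c},\frac{1}{1-c}]^2$. For the induction step, since $P \in \mathcal{P}_6$ permutes the three face-directions of $C$, $\pi(PB_n)$ equals a planar-rotated copy of one of the three face-projections of $B_n$, which by induction is the full square $\pi(C)$; the base-case four-square argument then applies again to give $\pi(B_{n+1}) = \pi(C)$.

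\textbf{The ``only if'' direction.} Suppose $A(c,P)$ is an IC of some cube $C$. To establish $c \geq 1/2$ I use a dimension argument: $A(c,P)$ is the attractor of four $c$-similarities, so $\dim_H A(c,P) \leq \log 4/\log(1/c)$; the IC condition forces the 2D projection $\pi(A(c,P)) = \pi(C)$ to be a square of positive planar Lebesgue measure, hence $\dim_H A(c,P) \geq 2$. Together these give $c \geq 1/2$.

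To establish $P \in \mathcal{P}_6$ I would work with the convex hull $K := \mathrm{conv}(A(c,P))$, which satisfies the Minkowski-sum identity $K = cPK + T_0$ and thus has support function $h_K(u) = \sum_{n\geq 0} c^n h_{T_0}((P^T)^n u)$. After a rigid motion I may assume $C = [-L, L]^3$ is axis-aligned; then the IC condition becomes $h_K(u) \leq L\|u\|_1$ for every $u$, with equality whenever $u$ lies in a coordinate plane. A direct computation with the vertices $v_1, \ldots, v_4$ shows $h_{T_0}(u) \leq \|u\|_1$, with equality precisely when $u$ lies in one of the four orthants containing some $v_i$, and in particular $h_{T_0} = \|\cdot\|_1$ on the entire union of the three coordinate planes. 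Inserting the series formula for $h_K$ into the equality condition on coordinate planes would force each $(P^T)$-orbit of a coordinate-plane direction to remain inside coordinate planes; a combinatorial analysis of which finite-order elements of $SO(3)$ stabilize the coordinate-plane arrangement should then yield that $P^T$ (hence $P$) permutes the six face-normals $\{\pm e_1, \pm e_2, \pm e_3\}$. Matching this with the fixed vertex set of $T_0$ finally gives $P \in \mathcal{P}_6$.

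The main obstacle is this last step: upgrading the analytic orbit constraint on $h_K$ to the discrete conclusion $P \in \mathcal{P}_6$. It is the full two-dimensional IC condition (face-coverage, not merely the one-dimensional extent of $A(c,P)$ in each face-normal direction) that is needed to pin down the permutation structure of $P$, and one likely also must argue the essential uniqueness of the cube $C$ serving as an IC for $A(c,P)$ in order to make the rigid-motion reduction clean.
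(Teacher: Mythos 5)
Your ``if'' direction and your $c\ge 1/2$ argument coincide with the paper's Propositions 3.5 and 3.6: the paper also constructs $B_n=\Phi^n(C)$ with $C=\frac{1}{1-c}C_0$, verifies $c\ge 1/2$ makes $\bigcup f_j(C)$ cover $C$ in the projection sense, invokes Lemma \ref{ic2}, and uses the Hausdorff-dimension bound $\dim_H A\le -\log 4/\log c<2$ together with Lipschitzness of projections to rule out $c<1/2$.

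The $P\in\mathcal{P}_6$ part is where you diverge, and your plan has two genuine gaps. First, you correctly flag that you need to know the cube $C$ is axis-aligned before the support-function normalization makes sense, but this is not a technicality: it is the content of the paper's Lemma \ref{ho2}, proved via Lemma \ref{ho1}, and its proof rests entirely on the translation property $A_i+v_j-v_i=A_j$ (Lemma \ref{trns}) together with Remark \ref{rem} (which guarantees a point of $A$ on each of the twelve edges of $C$). None of this information is visible to the convex hull. Second, the step ``force each $(P^T)$-orbit of a coordinate-plane direction to remain inside coordinate planes'' is asserted, not proved, and it is not obviously true at the level of $K=\mathrm{conv}(A)$: from $h_K=L\|\cdot\|_1$ on coordinate planes and $h_K\le L\|\cdot\|_1$ globally you can extract $\|u\|_1\le\|P^Tu\|_1$ on coordinate planes and a recursive identity for the series $\sum c^n h_{T_0}((P^T)^nu)$, but converting that into the discrete conclusion $P\in\mathcal{P}_6$ is exactly the hard part, and the convex-hull condition is strictly weaker than the IC condition on $A$ itself. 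The paper's proof never passes to the convex hull: instead it shows (Lemma \ref{ho3} and Claim 2) that the four pieces $A_1,\dots,A_4$ are each forced to touch three edges meeting at one vertex, builds from this a smaller cube $C_1$ with $A_1\subset C_1$ and points of $A_1$ on all of its edges, and then applies Lemma \ref{ho2} to $f_1^{-1}(C_1)$, which must again be a normal cube, forcing $cP^{-1}$ to map a normal cube to a normal cube and hence $P\in\mathcal{P}_6$. That scale-invariance step is what replaces your orbit analysis, and it crucially uses the point-on-every-edge structure rather than support-function data.

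In short: the easy two-thirds of your plan match the paper; the last third takes a different route that would require, at minimum, a proof of cube normality (the paper's Lemma \ref{ho2}) and a genuine argument for the orbit claim that you currently leave as a hope.
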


The proof is divided into the proofs of the following three propositions.

\begin{pro}
\label{decide1}
Let $c\in[1/2,1),P\in{\mathcal{P}_6}$ and $A(c,P)$ a FRT. Then $A(c,P)$ is an IC.
\end{pro}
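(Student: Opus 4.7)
The plan is to apply Lemma \ref{ic2} to a decreasing sequence of iterates of the Hutchinson operator $F(X) := \bigcup_{i=1}^{4} f_i(X)$ starting from a well-chosen ambient cube. I would take $C := [-a,a]^3 = aC_0$ with $a := 1/(1-c)$; since $P \in \mathcal{P}_6$ preserves $C_0$, one has $f_i(C) = caC_0 + v_i = [-ca, ca]^3 + v_i$, and the identity $1 + ca = a$ yields $f_i(C) \subset C$ for every $i$. Hence $B_n := F^n(C)$ is a decreasing sequence of non-empty compact sets, and the standard Hutchinson convergence gives $A(c,P) = \bigcap_{n \geq 0} B_n$.

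I would then prove by induction on $n$ that each $B_n$ is an IC of $C$. The base case $B_0 = C$ is trivial, and the inductive step reduces to the closure property: \emph{if $X$ is an IC of $C$, then so is $F(X)$.} Because every element of $\mathcal{P}_6$ permutes the coordinate axes with signs, for each standard projection $\pi \in \{\pi_{xy}, \pi_{xz}, \pi_{yz}\}$ the composition $\pi \circ P$ has the form $S \circ \pi'$ for some standard projection $\pi'$ and some symmetry $S$ of $[-a,a]^2$; applying this to the IC hypothesis $\pi'(X) = [-a,a]^2$ yields $\pi(PX) = [-a,a]^2$. Consequently $\pi_{xy}(f_i(X)) = c\,\pi_{xy}(PX) + \pi_{xy}(v_i)$ is the axis-aligned square of side $2ca$ centered at $(v_i^x, v_i^y)$, and as $i$ ranges over $\{1,2,3,4\}$ these centers sweep out the four corners $(\pm 1, \pm 1)$ of $[-1,1]^2$.

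The main obstacle is the covering argument: I must verify that the union of those four squares of side $2ca$ at the corners of $[-1,1]^2$ equals $[-a,a]^2$. This is precisely where the hypothesis $c \geq 1/2$ enters, via its equivalence to $ca \geq 1$. Given $(x,y) \in [-a,a]^2$, I would pick the index $i$ whose corner $(v_i^x, v_i^y)$ agrees in sign with $(x,y)$; using $1 + ca = a$ and $1 - ca \leq 0$, the point $(x,y)$ then lies in the square $[v_i^x - ca, v_i^x + ca] \times [v_i^y - ca, v_i^y + ca]$. The same argument applies verbatim to $\pi_{xz}$ and $\pi_{yz}$, which completes the inductive step. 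An application of Lemma \ref{ic2} to the sequence $\{B_n\}$ then concludes that $A(c,P)$ is an IC of $C$.
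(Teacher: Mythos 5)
Your proof is correct and follows essentially the same strategy as the paper: both take the cube $C = C_0/(1-c)$, iterate the Hutchinson operator to get a decreasing sequence of pre-fractals $B_n$, show by induction that each $B_n$ is an IC of $C$, and then invoke Lemma \ref{ic2}. Your inductive step carries out the same covering argument as the paper (where $c\ge 1/2$, i.e.\ $ca\ge 1$, is used), just with the three coordinate projections written out explicitly rather than by listing the vertices of $f_j(C)$.
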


\begin{pro}
\label{decide2}
Let $c\in(0,1/2),P\in{SO(3)}$ and $A(c,P)$ a FRT. Then $A(c,P)$ is not an IC.
\end{pro}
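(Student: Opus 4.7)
The plan is a Hausdorff-dimension argument. If $A=A(c,P)$ were an imaginary cube of some cube $C$ of side length $s>0$, then each of the three face-projections $\pi_i(A)$ would equal $\pi_i(C)$, a solid square of side $s$; in particular each $\pi_i(A)$ would have positive two-dimensional Lebesgue measure and therefore Hausdorff dimension $2$. Since the orthogonal projection $\pi_i\colon\mathbb{R}^3\to L_i$ is $1$-Lipschitz, $\dim_H\pi_i(A)\le\dim_H A$, so the IC assumption forces $\dim_H A\ge 2$. It therefore suffices to show $\dim_H A<2$ whenever $c<1/2$ and $P\in SO(3)$.

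For the upper bound on $\dim_H A$: since $P\in SO(3)$ is an isometry, each $f_i(x)=cPx+v_i$ is a similarity of ratio exactly $c$, so $\{f_1,\dots,f_4\}$ is a self-similar IFS consisting of $4$ maps with common contraction ratio $c$. The standard covering estimate -- writing $A=\bigcup_{|\omega|=n}f_\omega(A)$ and noting that each $f_\omega(A)$ has diameter $c^n\,\mathrm{diam}(A)$, so $A$ is covered by $4^n$ sets of this diameter -- gives the $s$-dimensional Hausdorff measure bound $\mathcal{H}^s(A)\le (4c^s)^n(\mathrm{diam}\,A)^s$, which tends to $0$ once $4c^s<1$. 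Hence $\dim_H A\le\log 4/\log(1/c)$. For $c<1/2$ we have $\log(1/c)>\log 2$, so $\log 4/\log(1/c)<\log 4/\log 2=2$. This contradicts $\dim_H A\ge 2$, completing the proof.

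The one point to be careful about is that the upper bound $\dim_H A\le\log 4/\log(1/c)$ must hold with \emph{no} separation hypothesis on the IFS, because for general $P\in SO(3)$ the pieces $f_i(A)$ can overlap arbitrarily (cf.\ Example \ref{ex}); fortunately the covering argument above requires no such hypothesis, so this causes no difficulty. Note also that $c=1/2$ is precisely the threshold at which the similarity dimension reaches $2$, which matches Proposition \ref{decide1} stating that ICs do exist once $c\ge 1/2$ (with $P\in\mathcal{P}_6$); hence the dimensional obstruction used here is sharp and cannot by itself handle the regime $c\ge 1/2$.
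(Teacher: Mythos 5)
Your proof is correct and follows essentially the same route as the paper: both argue that an IC must have full-measure square projections (hence Hausdorff dimension at least $2$), while the similarity-dimension bound $\dim_H A \le \log 4/\log(1/c) < 2$ for $c<1/2$ rules this out. The paper cites Falconer for both facts rather than reproducing the covering estimate, but the content is identical, and your explicit remark that no separation hypothesis is needed is a worthwhile observation the paper leaves implicit.
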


\begin{pro}
\label{decide3}
Let $c\in(0,1),P\in{SO(3)}\backslash{\mathcal{P}_6}$ and $A(c,P)$ a FRT. Then $A(c,P)$ is not an IC.
\end{pro}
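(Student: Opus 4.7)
The theorem is the conjunction of Propositions \ref{decide1}, \ref{decide2} and \ref{decide3}, so the plan is to prove these three separately: Proposition \ref{decide1} for the if direction, and Propositions \ref{decide2} and \ref{decide3} together for the only-if direction, ruling out respectively small contraction ratio and rotation outside $\mathcal{P}_6$. For Proposition \ref{decide1} ($c\in[1/2,1)$ and $P\in\mathcal{P}_6$), I would take as target cube $C_c=\tfrac{1}{1-c}C_0$ and verify each square projection directly from the address formula in Remark \ref{ad}. The key observation is that $\mathcal{P}_6$ permutes the eight vertices of $C_0$, so each $P^iv_{\omega_{i+1}}$ is a cube vertex and the set $\{P^iv_j:j=1,\dots,4\}$ is one of the two inscribed tetrahedra of $C_0$. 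Either tetrahedron has the property that its four vertices project bijectively onto $\{\pm1\}^2$ under any coordinate plane projection (the third coordinate being forced by the sign-product constraint that distinguishes the two tetrahedra). Consequently, at each level $i$ the first two coordinates $\epsilon_i,\eta_i$ of $P^iv_{\omega_{i+1}}$ can be prescribed independently in $\{\pm1\}$ by the choice of $\omega_{i+1}$, so the $xy$-coordinates of the address point take the form $\bigl(\sum_{i\ge0}c^i\epsilon_i,\sum_{i\ge0}c^i\eta_i\bigr)$ with arbitrary sign sequences. Because $c\ge1/2$, each such sum fills the whole interval $[-1/(1-c),1/(1-c)]$, and therefore the $xy$-projection of $A(c,P)$ is the full square $\pi_3(C_c)$; the other two directions are handled identically.

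For Proposition \ref{decide2} ($c<1/2$), a pure dimension count suffices. The set $A(c,P)$ is the attractor of an IFS of four similarities of common ratio $c$, so its Hausdorff dimension is bounded by the similarity dimension $\log 4/\log(1/c)$, which is strictly less than $2$. Orthogonal projection onto a plane does not increase Hausdorff dimension, so every planar projection of $A(c,P)$ has Hausdorff dimension strictly below $2$ and therefore vanishing two-dimensional Lebesgue measure; such a set cannot coincide with a square face of any cube, irrespective of $P$ or of the orientation of the candidate cube.

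Proposition \ref{decide3} is the main obstacle. I would argue by contradiction: assume $A(c,P)$ is an IC of some cube $C$ with face normals $e_1,e_2,e_3$ and extract rigidity from the self-similar identity $A=\bigcup_{i=1}^{4}f_i(A)$. Projecting along $e_j$ yields $\pi_j(A)=\bigcup_i\bigl(c\,\pi_j(PA)+\pi_j(v_i)\bigr)$, so the square face $\pi_j(C)$ decomposes as a union of four translates of the scaled shape $c\,\pi_j(PA)$, with $\pi_j(PA)$ being the projection of $A$ in the direction $P^{-1}e_j$. I would also use the width identity $w_u(A)=\sum_{i\ge0}c^i\,w(T_0,P^{-i}u)$, where $w(T_0,u)=\max_jv_j\cdot u-\min_jv_j\cdot u$, together with the IC-necessary condition $w_{e_1}(A)=w_{e_2}(A)=w_{e_3}(A)$. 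The hard step is to combine these two constraints (equal widths in three orthogonal directions, plus the four translated tiles of $\pi_j(PA)$ assembling into an exact square) and to conclude that $P$ must permute $\{\pm e_1,\pm e_2,\pm e_3\}$, i.e.\ that $P$ preserves $C$; once that rigidity is secured, the tetrahedral placement of $v_1,\dots,v_4$ inside $C_0$ pins down $C$ as a scaling of $C_0$, forcing $P\in\mathcal{P}_6$, contrary to hypothesis.
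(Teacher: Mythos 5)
Your proposal for Proposition \ref{decide3} is only a sketch and leaves the central step open. You write that ``the hard step is to combine these two constraints\ldots and to conclude that $P$ must permute $\{\pm e_1,\pm e_2,\pm e_3\}$'', but that deduction \emph{is} the content of the proposition, and nothing in your outline shows how the width identity and the tile decomposition actually force it. The equal-width condition alone is far too weak: it imposes only two scalar equations on the three-parameter choice of the candidate cube's orientation, so it cannot by itself single out a cube, let alone constrain $P$. The tile decomposition is stronger in principle, but you give no mechanism by which four translates of $c\,\pi_j(PA)$ assembling into an exact square pins down $P$. There is also a secondary gap at the end: even granting that $P$ stabilizes $C$, one must still argue that $C$ is a rescale of $C_0$ rather than some other cube fixed by $P$, which you only assert.

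The paper's proof is substantially different and does not work with projections or widths directly. It exploits Lemma \ref{trns} (the pieces $A_i$ are parallel translates of one another by the edge vectors $v_j-v_i$ of $T_0$) together with the consequence, recorded in Remark \ref{rem}, that an IC of $C$ is contained in $C$ and must meet every edge of $C$. Lemma \ref{ho1} uses the translation property to show that the twelve edge-touching points fall into four groups of three concurrent edges, one group per piece $A_j$, with the four concurrence vertices forming a regular tetrahedron; Lemma \ref{ho2} pushes this further to show that the bounding cube $C$ must be a \emph{normal} cube, i.e.\ an axis-aligned rescale of $C_0$. Finally, the decisive renormalization step (Claim 2 in the paper's proof, supported by Lemma \ref{ho3}) shows that $A_1=f_1(A)$ lies inside a smaller normal cube $C_1\subset C$ and again touches every edge of $C_1$; pulling back by $f_1^{-1}$ and reapplying Lemma \ref{ho2} forces $f_1^{-1}(C_1)=(cP)^{-1}(C_1-v_1)$ to be a normal cube, which can happen only when $P\in\mathcal{P}_6$. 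That self-similar renormalization, made available by the translation symmetry of the pieces and the edge-touching structure, is the ingredient your sketch is missing.
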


\begin{pf*}[proof of Proposition \ref{decide1}]
Fix $c\in [1/2,1),P\in{\mathcal{P}_6}$.

For all nonempty compact subset $K$, put $\Phi(K)=\bigcup_{j=1}^4f_j(K)$. Let $C$ be the cube whose vertices contain $\{{v_j}/({1-c})\}_{j=1}^4$ and $B_i={\Phi}^i(C)$. We show that for each $i$, $B_i$ is an IC of $C$. This is done by induction. Suppose that $B_i$ is an IC of $C$. Then for each $j=1,2,3,4$, $f_j(B_i)$ is an IC of $f_j(C)$. Since $P$ preserves $C$, $f_j(C)$ is the cube whose vertices contain $\{\frac{cv_l}{1-c}+v_j\}_{l=1}^4$. It follows that $f_1(C), f_2(C), f_3(C), f_4(C)$ are cubes whose vertices contain 
\begin{align*}
&\{(\frac{1}{1-c},\frac{-1}{1-c},\frac{1}{1-c}),(\frac{1-2c}{1-c},\frac{2c-1}{1-c},\frac{1}{1-c}),(\frac{1-2c}{1-c},\frac{-1}{1-c},\frac{1-2c}{1-c}),(\frac{1}{1-c},\frac{2c-1}{1-c},\frac{1-2c}{1-c})\},\\
&\{(\frac{-1}{1-c},\frac{1}{1-c},\frac{1}{1-c}),(\frac{2c-1}{1-c},\frac{1-2c}{1-c},\frac{1}{1-c}),(\frac{2c-1}{1-c},\frac{1}{1-c},\frac{1-2c}{1-c}),(\frac{-1}{1-c},\frac{1-2c}{1-c},\frac{1-2c}{1-c})\},\\
&\{(\frac{-1}{1-c},\frac{-1}{1-c},\frac{-1}{1-c}),(\frac{2c-1}{1-c},\frac{2c-1}{1-c},\frac{-1}{1-c}),(\frac{2c-1}{1-c},\frac{-1}{1-c},\frac{2c-1}{1-c}),(\frac{-1}{1-c},\frac{2c-1}{1-c},\frac{2c-1}{1-c})\}\\&\hspace{7.3cm} \mbox{and}\\
&\{(\frac{1}{1-c},\frac{1}{1-c},\frac{-1}{1-c}),(\frac{1-2c}{1-c},\frac{1-2c}{1-c},\frac{-1}{1-c}),(\frac{1}{1-c},\frac{1-2c}{1-c},\frac{2c-1}{1-c}),(\frac{1-2c}{1-c},\frac{1}{1-c},\frac{2c-1}{1-c})\}\\
\end{align*}
respectively. Combining this with the assumption $c\ge 1/2$, we obtain that for each $i, j\in \{1,2,3,4\}$, $f_i(C)\cap f_j(C)\neq \emptyset$ and $\bigcup_{j=1}^4 f_j(C)$ is an IC of $C$ (for the case $c=1/2$, see the figure of $B_1$ in Figure \ref{i-c4}). Thus $B_{i+1}$ is an IC of $C$. 
Then $C$, $\{B_i\}_{i=1}^{\infty}$ and FRT $A(c,P)$ satisfy the assumption of Lemma \ref{ic2} (see Figure \ref{i-c4}). \qed
\end{pf*}
\begin{figure}[h]
\begin{center}
\includegraphics[scale=0.3]{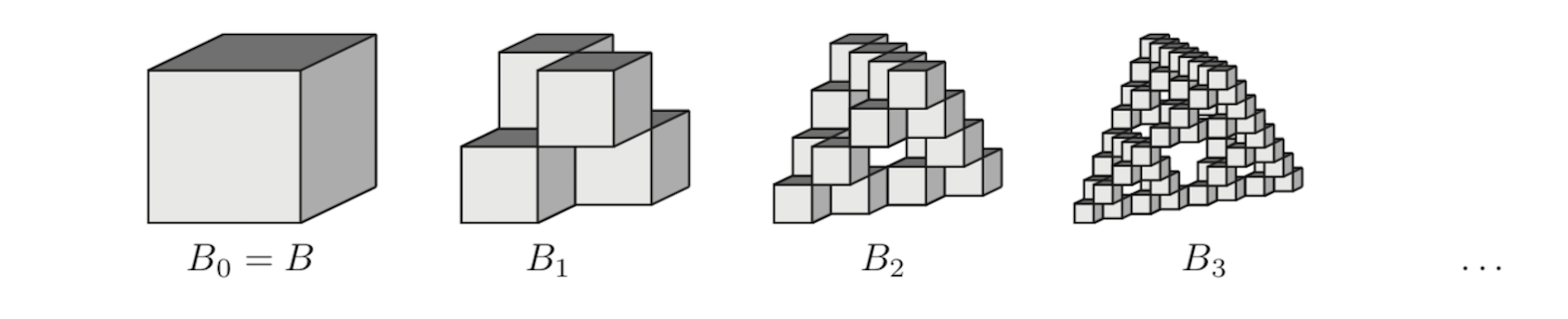}
\caption{The case $c=1/2$ and $P$ is the identity matrix.(\cite{T})}
\label{i-c4}
\end{center}
\end{figure}
\begin{pf*}[proof of Proposition \ref{decide2}]
Fix $c\in (0,1/2),P\in{SO(3)}$. We have the following.

\begin{itemize}
\item Let $A\subset{\mathbb{R}^3}$, let $L$ be the two-dimensional vector space of $\mathbb{R}^3$ and let $\pi_{L}:\mathbb{R}^3\rightarrow{L}$ be the orthogonal projection of $\mathbb{R}^3$ onto $L$. We denote by ${\rm dim}_H(A)$ the Hausdorff dimension of $A$ with respect to the Euclidean distance. Then 
\begin{align*}
{\rm dim}_H(\pi_{L}(A))\le{{\rm min}\{{\rm dim}_H(A),2\}}.
\end{align*}
by the Lipschitz continuity of $\pi_{L}$$ (see \cite[p. 98]{Fal})$.
\item The Hausdorff dimension is less than the similarity dimension, that is ${\rm dim}_H(A(c,P))\le{-\log4}/{\log{c}}<2$$ (see \cite[p. 140]{Fal})$.
\end{itemize}
As can be seen from the claims above, it follows that $\pi_{L}(A(c,P))$ is not a rectangle for all two-dimensional vector spaces $L$.\qed

\end{pf*}
In order to prove Proposition \ref{decide3}, we need the following terminologies and lemmas 3.9, 3.10, 3.11.
\begin{df}[positive side and negative side]
Let $S$ be a face, let $u$ be a vector which is normal to $S$, and let $a\in{\mathbb{R}^3}$ as in Figure \ref{aaa1}. Then we say that $a$ belongs to the positive (resp. negative) side of $S$ (with respect to $u$) if there exist $x_0\in{S}$ such that $(u,a-x_0)>0$(resp. $<0)$. Note that this definition is independent of choice of $x_0$.
\end{df}
\begin{figure}[h]
  \begin{center}
    \begin{tabular}{c}

      \begin{minipage}{0.4\hsize}
        \begin{center}
          \includegraphics[clip, width=5cm]{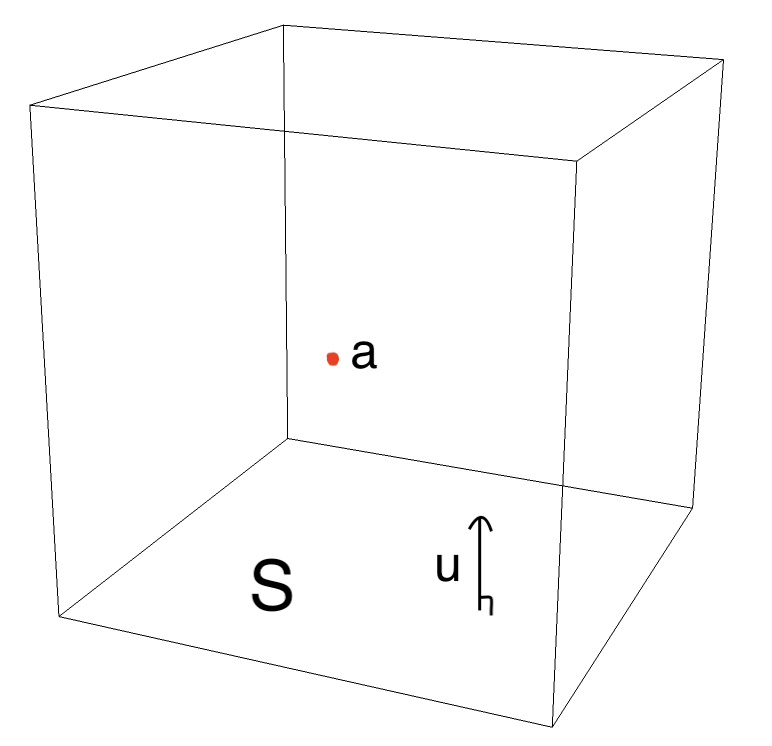}
         \caption{The point $a$ belongs to the positive side of $S$ with respect to $u$.}
     \label{aaa1}

        \end{center}
      \end{minipage}

    \end{tabular}

  \end{center}
\end{figure}
In this paper,  we say that $C$ is a normal cube if there exist $b\in \mathbb{R}$ and $w\in \mathbb{R}^3$ such that $C=bC_0+w$. Recall that $C_0$ is the cube whose vertices contain $v_1,v_2,v_3,v_4$ and $T_0$ is the regular tetrahedron whose vertices are $v_1,v_2,v_3,v_4$.
\begin{lem}
\label{ho1}
Let $A=A(c,P)$ be a FRT and let $C$ be a cube. We denote by $\{e_i\}_{i=1}^{12}$ a finite sequence such that $\{e_i|i=1,...,12\}$ is equal to the set of all edges of $C$. Suppose $A \subset C$ and there exist points $a_1,...,a_{12}$ of $A$ such that $a_i \in e_i$ for each $i=1,...,12$. Then for each $j\in \{1,2,3,4\}$, there exists a vertex $p_j$ of $C$ such that edges $\{e_i|a_i \in A_j \}$ of $C$ share $p_j$ and $p_1, p_2, p_3, p_4$ are vertices of a regular tetrahedron.
\end{lem}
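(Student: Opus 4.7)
The plan is to decompose the conclusion into two steps: \textbf{(a)} for each $j\in\{1,2,3,4\}$, the edges in $E_j:=\{e_i:a_i\in A_j\}$ all share a common vertex $p_j$ of $C$; and \textbf{(b)} the four vertices $p_1,\ldots,p_4$ are the vertices of a regular tetrahedron inscribed in $C$. The geometric heart lies in (a); (b) will follow by a short combinatorial count on the cube.

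For (a), the starting point is the inclusion $A_j=cPA+v_j\subset cPC+v_j$, obtained from $A\subset C$. Thus each piece is simultaneously contained in $C$ and in a shifted cube of side length $cs$, where $s$ is the side length of $C$. I would then show that all edges in $E_j$ meet at a common vertex of $C$ by combining two ingredients: any two non-adjacent edges of $C$ are at Euclidean distance at least $s$ from each other, while any two points of $cPC+v_j$ lie within distance $cs\sqrt{3}$. For $c<1/\sqrt{3}$ these inequalities immediately force the edges in $E_j$ to share a vertex. For $c\ge 1/\sqrt{3}$ one has to supplement the diameter bound with the full strength of the hypothesis that \emph{every} edge of $C$ carries a point of $A$: this tight-fit condition constrains $C$'s position so that the four shifted cubes $cPC+v_j$ collectively cover $A$ while each of them reaches only edges incident to a single vertex of $C$.

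For (b), set $k_j:=|E_j|$. Since each $a_i$ lies in exactly one piece $A_j$, $\sum_{j=1}^{4} k_j = 12$. By (a) the $k_j$ edges in $E_j$ are all incident to $p_j$, and since every vertex of a cube is incident to exactly three edges, $k_j\le 3$. Together with $\sum_j k_j=12=4\cdot 3$ this forces $k_j=3$ for each $j$, so the twelve edges of $C$ split into four triples, one triple at each $p_j$. Hence no two distinct $p_j$'s share an edge of $C$, i.e.\ they are pairwise non-adjacent vertices of the cube. Since the only four-element sets of pairwise non-adjacent vertices of a cube are the vertex sets of the two inscribed regular tetrahedra, $\{p_1,\ldots,p_4\}$ is such a tetrahedron. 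The main obstacle is the geometric assertion in (a) when $c$ is close to $1$: there the shifted cubes $cPC+v_j$ are comparable in size to $C$ itself and direct diameter bounds fail, so one must genuinely invoke the hypothesis that all twelve edges of $C$ contain points of $A$ to force each $A_j$ to be localized near a single vertex of $C$.
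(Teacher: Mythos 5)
Your part (b) is sound and essentially what the paper does implicitly (the at-most-three bound forces exactly three per piece, the triples partition the twelve edges, and a partition of the edges into four concurrent triples picks out an inscribed regular tetrahedron). Your part (a), however, has a genuine gap, and you yourself flag it: the diameter bound $\mathrm{diam}(cPC+v_j)=cs\sqrt{3}$ versus the minimum distance $s$ between non-adjacent edges only yields the conclusion when $c<1/\sqrt{3}$. For $c\ge 1/\sqrt{3}$ (which includes most of the relevant range, since the imaginary-cube FRTs live at $c\in[1/2,1)$) you offer no argument, only the hope that the "tight-fit condition" localizes each piece near a vertex. That is precisely the nontrivial content of the lemma, so as written the proof does not go through.

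The paper closes this gap with a different mechanism that works uniformly in $c$: the translation property $A_i+v_j-v_i=A_j$ (Lemma \ref{trns}). Concretely, it first proves that two points of the \emph{same} piece cannot lie on a pair of parallel faces of $C$. If $a\in A_i$ lies on a face $S$ with outward normal $u$, then since $a+(v_l-v_i)\in A_l\subset C$ for all $l\neq i$ and the three vectors $v_l-v_i$ span $\mathbb{R}^3$, some $l$ has $(u,v_l-v_i)<0$; but then for any $b\in A_i$ on the opposite face $T$, the translate $b+(v_l-v_i)$ would fall strictly outside $C$, contradicting $A_l\subset C$. Any two edges of a cube that do not share a vertex lie on a pair of parallel faces, so this immediately caps $|\{i:a_i\in A_j\}|$ at $3$ and forces each triple to be concurrent. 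The point is that this "push the opposite face outward" argument exploits the full tetrahedral geometry of the four translates and needs no smallness of $c$ — which is exactly what your diameter estimate cannot provide. To repair your proof you would need to replace the metric ingredient of (a) with an argument of this parallel-faces type (or an equivalent one that uses all four pieces simultaneously), rather than trying to bound a single piece.
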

\begin{prf*}
First, we prove the following claim.

\underline{Claim1} Let $A$ be a FRT, let $C$ be a cube such that $A\subset C$ and let $a,b$ be points of $A$. If there exist two faces $S$ and $T$ of $C$ which are parallel to each other such that $a\in S$ and $b\in T$, then there exist $i\neq j \in \{1,2,3,4\}$ such that $a\in A_i$ and $b\in A_j$.  

To prove Claim1, we define $u$ as a vector which is normal to $S$ and $T$ as in Figure \ref{aaa2}. There exists $i\in \{1,2,3,4\}$ such that $a\in A_i$. We have $\{a+(v_l-v_{i})|l\neq{i}\}\subset{A}(\subset{C})$ by Proposition $\ref{trns}$. Furthermore $\{\overline{v_{i}v_l}\}_{l\neq{i}}$ are edges of regular tetrahedron $T_0$, thus there exists $l\in \{1,2,3,4\}$ with $l\neq{i}$ such that $a+(v_l-v_{i})$ belongs to the negative side of $S$ with respect to $u$. If $b\in A_i$, then $b+(v_l-v_i)$ belongs to the negative side of $T$ since $S$ and $T$ are facing each other. But this contradicts $b+(v_l-v_i)\in A_l\subset C$. Hence there exists $j\in \{1,2,3,4\}$ with $j\neq{i}$ such that $b\in A_j$. We have thus proved the claim.

Suppose that there exist mutually distinct numbers $i_1, i_2, i_3, i_4\in \{1,2,...,12\}$ and $j\in \{1,2,3,4\}$ such that for each $k\in\{i_1, i_2, i_3, i_4\}$, $a_k\in A_j$. Then there exist two distinct numbers $k,l\in \{i_1, i_2, i_3, i_4\}$ such that two edges $e_{k}$ and $e_{l}$ are parallel to each other or twisted, and hence $a_{k},a_{l}\in A_j$. But this contradicts Claim1. Hence for each $j\in \{	1,2,3,4\}$, we have that the cardinality of $\{k\in \{1,2,...,12\}| a_k\in A_j \}$ is equal to $3$. For each $j\in \{1,2,3,4\}$, let $k_1, k_2, k_3$ be mutually distinct elements in $\{k\in \{1,2,...,12\}| a_k\in A_j \}$. Suppose that the edges $e_{k_1},e_{k_2}$ and $e_{k_3}$ do not share a vertex of $C$, then there exist two distinct numbers $k,l\in \{k_1,k_2,k_3\}$ such that two edges $e_{k}$ and $e_{l}$ are parallel to each other or twisted, and hence $a_{k},a_{l}\in A_j$. But this contradicts Claim1. Hence the edges $e_{k_1},e_{k_2}$ and $e_{k_3}$ share a vertex of $C$.         

Then we have proved our lemma. \qed
\end{prf*}
\begin{figure}[h]
  \begin{center}
    \begin{tabular}{c}

      \begin{minipage}{0.4\hsize}
        \begin{center}
          \includegraphics[clip, width=5cm]{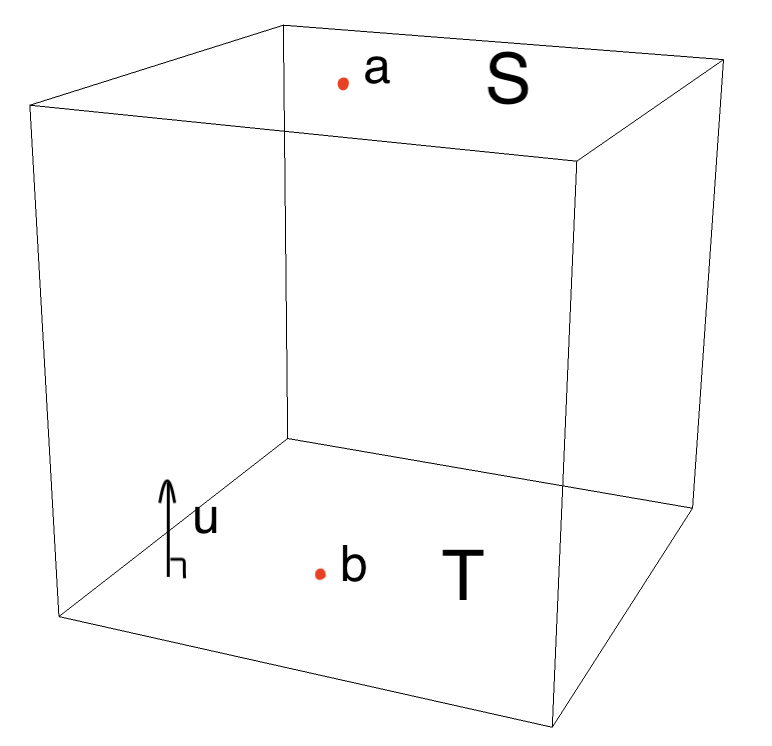}
         \caption{The face $S$ is parallel to $T$. The vector $u$ is normal to $S$ and $T$.}
     \label{aaa2}

        \end{center}
      \end{minipage}

    \end{tabular}

  \end{center}
\end{figure}
As a corollary of Lemma \ref{ho1}, we get Lemma \ref{ho2} and Lemma \ref{ho3}.
\begin{lem}
\label{ho2}
Let $A=A(c,P)$ be a FRT and let $C$ be a cube. We denote by $\{e_i\}_{i=1}^{12}$ a finite sequence such that $\{e_i|i=1,...,12\}$ is equal to the set of all edges of $C$. Suppose $A \subset C$ and there exist points $a_1,...,a_{12}$ of $A$ such that $a_i \in e_i$ for each $i=1,...,12$. Then $C$ is a normal cube.
\end{lem}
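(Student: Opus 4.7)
The plan is to combine Lemma \ref{ho1} with Lemma \ref{trns} to derive an algebraic identity on the rotational part of $C$ that forces $C$ to be axis-aligned. Write $C = bQC_0 + w$ with $b > 0$, $Q \in SO(3)$, $w \in \mathbb{R}^3$; the two regular tetrahedra inscribed in $C$ are $\{bQv_j + w\}_{j=1}^{4}$ and $\{-bQv_j + w\}_{j=1}^{4}$, so the vertices $p_j$ produced by Lemma \ref{ho1} can be written $p_j = \varepsilon bQ v_{\sigma(j)} + w$ for some sign $\varepsilon \in \{\pm 1\}$ and permutation $\sigma$ of $\{1,2,3,4\}$. The goal is to show $Q \in \mathcal{P}_6$, since this forces $QC_0 = C_0$ and hence $C = bC_0 + w$.

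The first step is to prove $p_i + (v_j - v_i) \in C$ for all $i \ne j$. For each edge-direction index $\ell \in \{1,2,3\}$, the corresponding point of $A_i$ on the edge of $C$ at $p_i$ parallel to $Qe_\ell$ has the form $p_i + t\, d$, where $d$ is a scalar multiple of $Qe_\ell$ and $t \in [0,1]$. By Lemma \ref{trns} its translate by $v_j - v_i$ lies in $A_j \subset C$. Expanding this inclusion in the basis $\{Qe_1, Qe_2, Qe_3\}$, the two coordinates orthogonal to $d$ give inequalities independent of $t$; letting $\ell$ range over $\{1,2,3\}$ assembles these into all three coordinate-wise inequalities defining the claim.

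Set $a_{j,k} := (Q^T v_j)^{(k)}$ and expand $p_i + (v_j - v_i) \in C$ in the same basis. For each $k$, the resulting inequalities force $a_{i,k}$ to be the weak maximum (resp.\ minimum) of $\{a_{j,k}\}_{j=1}^{4}$ whenever $\varepsilon v_{\sigma(i)}^{(k)} = +1$ (resp.\ $-1$); since each sign occurs for exactly two indices $i$, the set $\{a_{j,k}\}_j$ has at most two distinct values. Combining with the identity $\sum_j a_{j,k} = 0$ (from $\sum_j v_j = 0$) forces those values to be $\pm \alpha_k$ for some $\alpha_k \ge 0$, giving $Q^T v_j = \varepsilon D v_{\sigma(j)}$ with $D := \mathrm{diag}(\alpha_1, \alpha_2, \alpha_3)$, equivalently $v_j = M v_{\sigma(j)}$ for $M := \varepsilon QD$.

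Finally, the vertices of $T_0$ share norm $\sqrt{3}$ and common pairwise inner product $-1$, so any linear map permuting them preserves the Gram matrix on the basis $\{v_{\sigma(1)}, v_{\sigma(2)}, v_{\sigma(3)}\}$ and therefore lies in $O(3)$. The identity $M^TM = D^2 = I$ then yields $\alpha_k^2 = 1$; noting that $\alpha_k = 0$ would force the $k$-th column of $Q$ to vanish, each $\alpha_k = 1$, so $D = I$ and $M = \varepsilon Q$. By linearity $M$ also preserves $\{-v_j\}$, hence the full vertex set $\{\pm 1\}^3$ of $C_0$, and an orthogonal map with this property is a signed permutation matrix; thus $Q = \varepsilon M$ is a signed permutation, and combined with $\det Q = 1$ this places $Q$ in $\mathcal{P}_6$. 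The main technical obstacle is the first step: cleanly extracting the $t$-independent inclusion $p_i + (v_j - v_i) \in C$ from the containment of the translated edge points.
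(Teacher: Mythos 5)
Your proof is correct, and while it starts from the same two ingredients as the paper --- the tetrahedral vertex assignment from Lemma \ref{ho1} and the parallel translation of pieces from Lemma \ref{trns} combined with $A\subset C$ --- it finishes by a genuinely different, more algebraic route. The paper fixes the vertex $p_1$, takes the specific points $a_i\in e_i\cap A_1$ and $\alpha_i\in e_i'\cap A_i$ on pairs of parallel edges of the three faces through $p_1$, and argues synthetically (via the positive/negative-side notion) that each such face must be parallel to the corresponding edge $\overline{v_1v_i}$ of $T_0$, from which normality of $C$ is read off directly; this last step is stated rather tersely there. You instead extract the cleaner, $t$-independent containment $p_i+(v_j-v_i)\in C$ for all pairs $i\neq j$ (your step of reading off only the two coordinates orthogonal to the edge direction and then varying the edge is exactly right, since the three edges carrying points of $A_i$ are the three edges at $p_i$), turn it into the identity $Q^{T}v_j=\varepsilon D v_{\sigma(j)}$, and then pin down $D=I$ and $Q\in\mathcal{P}_6$ via the Gram matrix of the tetrahedron's vertices. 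What your version buys is a fully explicit justification of the final implication ``three orthogonal constraints on the orientation $\Rightarrow$ $QC_0=C_0$,'' which the paper leaves largely to the reader; what the paper's version buys is that it needs only translates between $A_1$ and the other pieces and stays entirely coordinate-free. Both are complete proofs of the lemma.
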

\begin{prf*}
By Lemma \ref{ho1}, there exist vertices of a regular tetrahedron $p_1, p_2, p_3, p_4$ of $C$ such that edges $\{e_i|a_i \in A_j \}$ of $C$ share $p_j$ for each $j\in\{1,2,3,4\}$(without loss of generality, we can put the points $p_1, p_2, p_3, p_4$ as in Figure \ref{ccc}). 

For each $i\in \{2,3,4\}$, let $S_i$ be the face which contain $p_1$ and $p_i$(See Figure \ref{ccc1}). 

Let $e_2=S_2\cap S_4$, $e_3=S_2\cap S_3$ and $e_4=S_3\cap S_4$(See Figure \ref{ccc2}).

Since the definition of $p_1$, there exist points $a_2\in e_2\cap A_1$, $a_3\in e_3\cap A_1$ and $a_4\in e_4\cap A_1$(See Figure \ref{ccc3}).

For each $i\in \{2,3,4\}$, let $e^{\prime}_i$ be the edge of $S_i$ which is parallel to $e_i$(See Figure \ref{ccc4}).

Since the definition of $p_i$, there exists points $\alpha_i\in e^{\prime}_i\cap A_i$ for each $i\in \{2,3,4\}$(See Figure \ref{ccc5}).

Let $u_2,u_3,u_4$ be three orthogonal vectors such that for each $i=2,3,4$, the initial point is $p_1$ and $u_i$ is parallel to $e_i$(See Figure \ref{ccc6}).

We prove that the edge $\overline{v_1v_i}$ is parallel to $S_i$ for all $i\in\{2,3,4\}$. Suppose that $\overline{v_1v_2}$ is not parallel to $S_2$. Then we have $a_2+(v_2-v_1)$ or $\alpha_2+(v_1-v_2)$ belongs to the negative side of $S_2$ with respect to $u_4$. But this contradicts $a_2+(v_2-v_1)\in A_2\subset C$ or $\alpha_2+(v_1-v_2)\in A_1\subset C$. Suppose that $\overline{v_1v_3}$ is not parallel to $S_3$. Then we have $a_3+(v_3-v_1)$ or $\alpha_3+(v_1-v_3)$ belongs to the negative side of $S_3$ with respect to $u_2$. But this contradicts $a_3+(v_3-v_1)\in A_3\subset C$ or $\alpha_3+(v_1-v_3)\in A_1\subset C$. Suppose that $\overline{v_1v_4}$ is not parallel to $S_4$. Then we have $a_4+(v_4-v_1)$ or $\alpha_4+(v_1-v_4)$ belongs to the negative side of $S_4$ with respect to $u_3$. But this contradicts $a_4+(v_4-v_1)\in A_4\subset C$ or $\alpha_4+(v_1-v_4)\in A_1\subset C$. Hence $S_2, S_3, S_4$ are parallel to $\overline{v_1v_2},\overline{v_1v_3},\overline{v_1v_4}$ respectively.

Since $\overline{v_1v_2},\overline{v_1v_3},\overline{v_1v_4}$ are edges of $T_0$ which share $v_1$ and $C_0$ is a cube whose vertices contains $v_1, v_2, v_3, v_4$, $C$ is a normal cube(See Figure \ref{ccc7}).

Then we have proved our lemma. \qed
\end{prf*}

\begin{figure}[h]
  \begin{center}
    \begin{tabular}{c}

      \begin{minipage}{0.4\hsize}
        \begin{center}
          \includegraphics[clip, width=5cm]{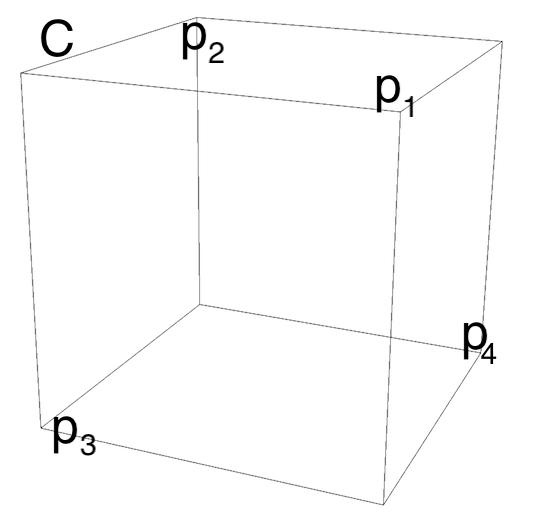}
         \caption{}
     \label{ccc}

        \end{center}
      \end{minipage}
\hspace{0.3cm}
   
      \begin{minipage}{0.4\hsize}
        \begin{center}
          \includegraphics[clip, width=5cm]{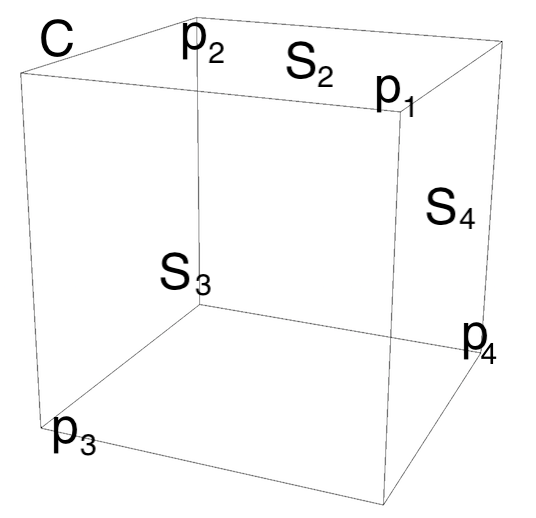}
         \caption{}
       \label{ccc1}
    
        \end{center}
      \end{minipage}

    \end{tabular}

  \end{center}
\end{figure}

\begin{figure}[h]
  \begin{center}
    \begin{tabular}{c}

      \begin{minipage}{0.4\hsize}
        \begin{center}
          \includegraphics[clip, width=5cm]{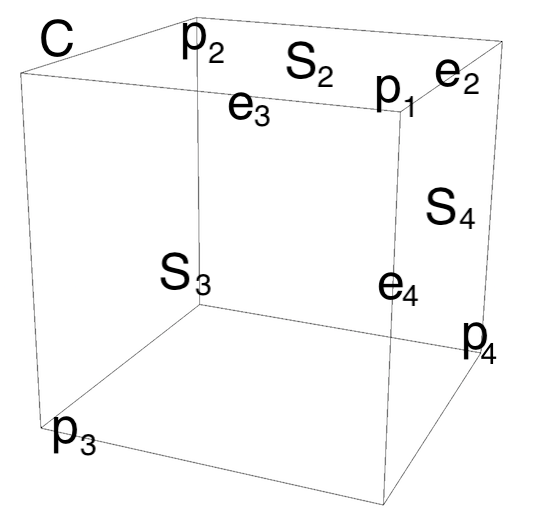}
         \caption{}
     \label{ccc2}

        \end{center}
      \end{minipage}
\hspace{0.3cm}
   
      \begin{minipage}{0.4\hsize}
        \begin{center}
          \includegraphics[clip, width=5cm]{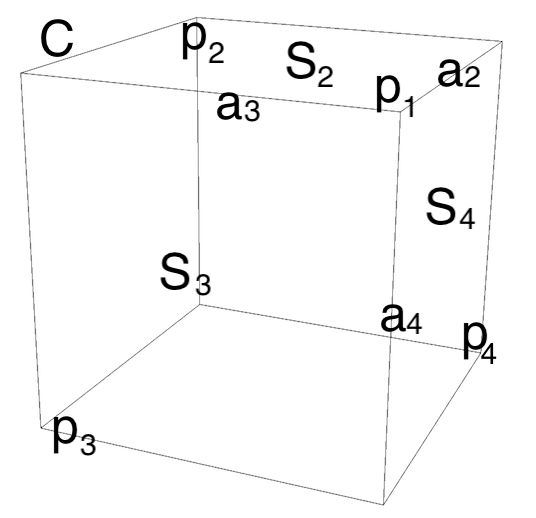}
         \caption{}
       \label{ccc3}
    
        \end{center}
      \end{minipage}

    \end{tabular}

  \end{center}
\end{figure}

\begin{figure}[h]
  \begin{center}
    \begin{tabular}{c}

      \begin{minipage}{0.4\hsize}
        \begin{center}
          \includegraphics[clip, width=5cm]{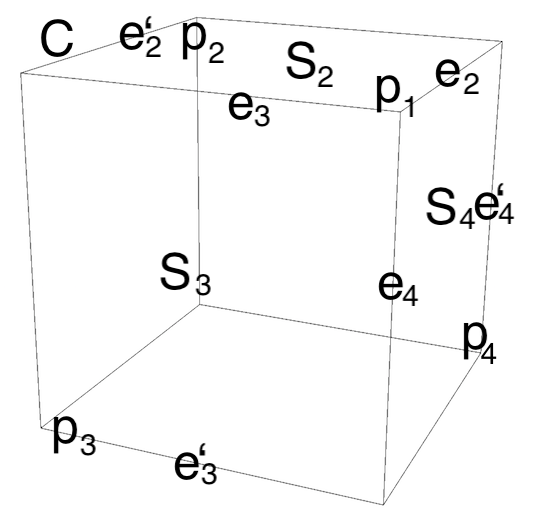}
         \caption{}
     \label{ccc4}

        \end{center}
      \end{minipage}
\hspace{0.3cm}
   
      \begin{minipage}{0.4\hsize}
        \begin{center}
          \includegraphics[clip, width=5cm]{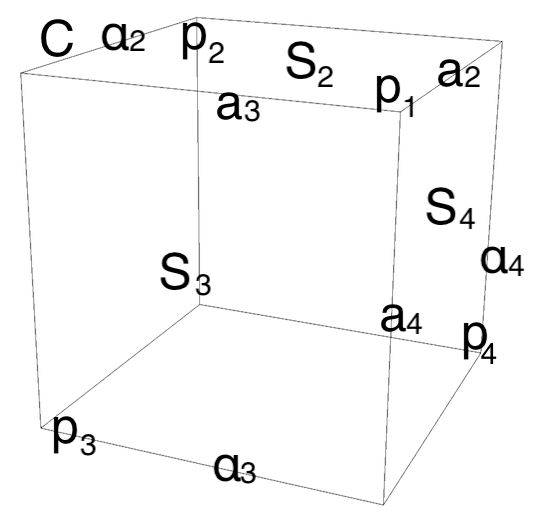}
         \caption{}
       \label{ccc5}
    
        \end{center}
      \end{minipage}

    \end{tabular}

  \end{center}
\end{figure}
\begin{figure}[h]
  \begin{center}
    \begin{tabular}{c}

      \begin{minipage}{0.4\hsize}
        \begin{center}
          \includegraphics[clip, width=5cm]{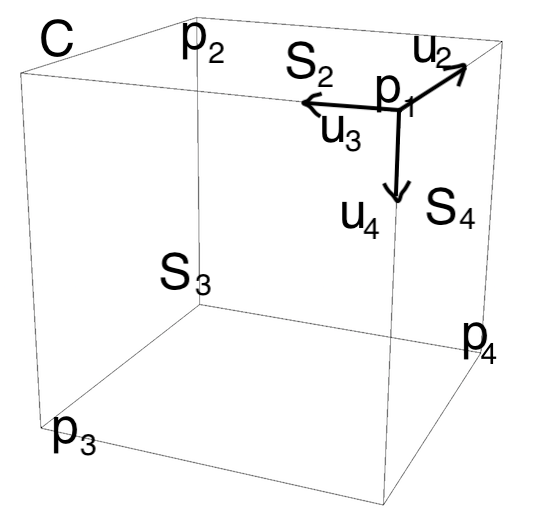}
         \caption{}
     \label{ccc6}

        \end{center}
      \end{minipage}
\hspace{0.3cm}
   
      \begin{minipage}{0.4\hsize}
        \begin{center}
          \includegraphics[clip, width=5cm]{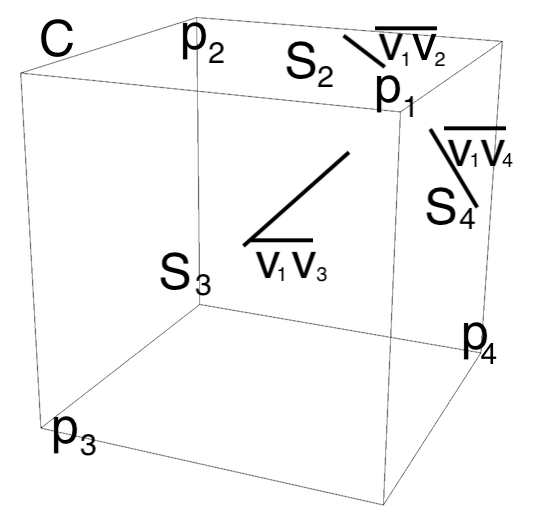}
         \caption{}
       \label{ccc7}
    
        \end{center}
      \end{minipage}

    \end{tabular}

  \end{center}
\end{figure}

\begin{lem}
\label{ho3}
Let $A=A(c,P)$ be a FRT and let $C$ be a normal cube. Let $\{S_i\}_{i=1}^6$ be faces of $C$ and let $u_1,u_2,u_3$ be orthonormal vectors parallel to $x,y$ and $z$ axis respectively  as in Figure \ref{cuu}. Suppose $A \subset C$. Let $a_1,...,a_{12}$ be points of $A$ such that for each edge $e$ of $C$, there exists $i$ with $a_i\in e$ as in Figure \ref{cu}. Then $\{a_1,a_2,a_3\}\subset{A_1},\{a_4,a_5,a_6\}\subset{A_2},\{a_7,a_8,a_9\}\subset{A_3}$ and $\{a_{10},a_{11},a_{12}\}\subset{A_4}$.
\end{lem}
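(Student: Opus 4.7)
My plan is to determine, for every edge of the normal cube $C$, which piece $A_j$ of the FRT can host a point on that edge; once that is settled, the labeling of $a_1,\dots,a_{12}$ fixed by Figures \ref{cuu} and \ref{cu} immediately yields the four inclusions. The main engine will be the translation relation $A_i+v_j-v_i=A_j$ of Lemma \ref{trns}, combined with the fact that $A\subset C$.

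The first step will be to prove a localization principle, in spirit identical to Claim 1 in the proof of Lemma \ref{ho1}: if $a\in A_i$ lies on a face $S$ of $C$ with outward normal $u$, then $(u,v_i)=\max_{1\le j\le 4}(u,v_j)$. Indeed, for each $l\ne i$ the translate $a+(v_l-v_i)$ lies in $A_l\subset C$, so it cannot belong to the positive side of $S$ with respect to $u$, which forces $(u,v_l-v_i)\le 0$ and hence the claim.

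Next, since $C=bC_0+w$ for some $b\in\mathbb{R}$ and $w\in\mathbb{R}^3$, its outward normals are precisely $\pm u_1,\pm u_2,\pm u_3$. I would read off the twelve inner products $(\pm u_k,v_j)$ directly from the coordinates of $v_1,v_2,v_3,v_4$, observing that on each of the six faces of $C$ exactly two of the indices $j$ attain the maximum. Intersecting the two ``admissible'' index sets for the pair of faces meeting along any edge of $C$ then leaves a single admissible $j$. A short inspection shows that the three edges for which only $A_j$ is admissible are exactly the three edges sharing the vertex $bv_j+w$, and these are what Figure \ref{cu} labels as $a_{3j-2},a_{3j-1},a_{3j}$.

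The hard part, as far as I can see, is not mathematical but bookkeeping: one has to keep six faces, twelve edges, and the two inscribed tetrahedra of vertices of $C$ straight, and then check that the convention of Figure \ref{cu} really aligns with the piece assignment coming from the previous step. The key numerical fact that makes everything work is that the maximum of $(\pm u_k,v_j)$ over $j\in\{1,2,3,4\}$ is always attained by exactly two vertices; this is what forces each edge to admit a unique piece without any ambiguity, so no delicate estimate or additional hypothesis is required.
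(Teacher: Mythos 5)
Your proof is correct and takes a genuinely cleaner route than the paper's. The paper first invokes Lemma \ref{ho1} to reduce to two possible combinatorial configurations (the two inscribed tetrahedra of $C$), then rules out one of them and pins down the index assignment by a long sequence of ad-hoc translate-and-contradict checks. You instead extract the common mechanism into a single localization principle: if $a\in A_i$ lies on a face with outward normal $u$, then $a+(v_l-v_i)\in A_l\subset C$ forces $(u,v_l-v_i)\le 0$ for all $l$, so $i$ must maximize $(u,v_j)$ over $j$. Because $C$ is normal its outward normals are $\pm u_1,\pm u_2,\pm u_3$, and a direct check on the coordinates of $v_1,\dots,v_4$ shows each of these six linear functionals is maximized by exactly two of the $v_j$'s; intersecting the two admissible two-element sets attached to the pair of faces meeting along a given edge leaves exactly one index, and the three edges assigned index $j$ are precisely the three edges meeting at the vertex $bv_j+w$. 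This handles Lemma \ref{ho1}'s conclusion (for the normal-cube case) and the index assignment in one pass, with no case split between configurations (i) and (ii). What you lose relative to the paper is generality: your version of the localization principle relies on the explicit normals of a normal cube, so it does not directly reprove Lemma \ref{ho1} for an arbitrary cube, whereas the paper's Lemma \ref{ho1} is stated and needed in that generality (it is what makes Lemma \ref{ho2} go). Also be aware that the final matching to the exact labels $a_1,\dots,a_{12}$ really does depend on reading Figures \ref{cu} and \ref{cuu}, so some explicit bookkeeping is unavoidable there; but the mathematical content you supply is complete.
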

\begin{prf*}
By Lemma \ref{ho1}, it follows that 
\begin{itemize}
\item[(i)]$\{a_1,a_2,a_3\}\subset{A_{i_1}},\{a_4,a_5,a_6\}\subset{A_{i_2}},\{a_7,a_8,a_9\}\subset{A_{i_3}},\{a_{10},a_{11},a_{12}\}\subset{A_{i_4}}$, or 
\item[(ii)] $\{a_1,a_7,a_{12}\}\subset{A_{i_1}},\{a_2,a_5,a_{10}\}\subset{A_{i_2}},\{a_3,a_6,a_9\}\subset{A_{i_3}}, \{a_4, a_8, a_{11} \}\subset{A_{i_4}}$
\end{itemize}
 Here, $i_1,i_2,i_3$ and $i_4$ are mutually distinct. First, we show that (ii) never happen.

Suppose that $i_1=1$. Since $C$ is normal, it follows that $a_7+(v_3-v_1)$ belongs to the negative side of $S_4$ with respect to $u_3$. But this contradicts that $a_7+(v_3-v_1)\in A_3 \subset C$. 

Suppose that $i_1=2$. Then it follows that $a_1+(v_1-v_2)$ belongs to the positive side of $S_2$ with respect to $u_1$. But this contradicts that $a_1+(v_1-v_2)\in A_1 \subset C$.

Suppose that $i_1=3$. Then it follows that $a_1+(v_1-v_3)$ belongs to the positive side of $S_2$ with respect to $u_1$. But this contradicts that $a_1+(v_1-v_2)\in A_1 \subset C$.

Suppose that $i_1=4$. Then it follows that $a_1+(v_1-v_4)$ belongs to the negative side of $S_1$ with respect to $u_2$. But this contradicts that $a_1+(v_1-v_4)\in A_1 \subset C$.

Hence (i) holds. 

We next show that 
\begin{align}
i_1=1, i_2=2, i_3=3, i_4=4.
\end{align}

In order to prove $i_1=1$, suppose that $i_1=2$. Then it follows that $a_1+(v_1-v_2)$ belongs to the positive side of $S_2$ with respect to $u_1$. But this contradicts that $a_1+(v_1-v_2)\in A_1 \subset C$.

Suppose that $i_1=3$. Then it follows that $a_1+(v_1-v_3)$ belongs to the positive side of $S_2$ with respect to $u_1$. But this contradicts that $a_1+(v_1-v_3)\in A_1 \subset C$.

Suppose that $i_1=4$. Then it follows that $a_1+(v_1-v_4)$ belongs to the negative side of $S_1$ with respect to $u_2$. But this contradicts that $a_1+(v_1-v_4)\in A_1 \subset C$.

Hence it follows that $i_1=1$.

In order to prove $i_2=2$, suppose that $i_2=3$. Then it follows that $a_5+(v_2-v_3)$ belongs to the positive side of $S_3$ with respect to $u_3$. But this contradicts that $a_5+(v_2-v_3)\in A_2 \subset C$.

Suppose that $i_2=4$. Then it follows that $a_5+(v_2-v_4)$ belongs to the positive side of $S_3$ with respect to $u_3$. But this contradicts that $a_5+(v_2-v_4)\in A_2 \subset C$.

Hence it follows that $i_2=2$.

In order to prove $i_3=3$, suppose that $i_3=4$. Then it follows that $a_7+(v_3-v_4)$ belongs to the negative side of $S_1$ with respect to $u_2$. But this contradicts that $a_7+(v_3-v_4)\in A_3 \subset C$.

Hence it follows that $i_3=3$.

Since $i_1,i_2,i_3$ and $i_4$ are mutually distinct, $i_4=4$. Hence we have proved (1). 

Thus we have proved our lemma. \qed
\end{prf*}
\begin{figure}[h]
  \begin{center}
    \begin{tabular}{c}

      \begin{minipage}{0.4\hsize}
        \begin{center}
          \includegraphics[clip, width=5cm]{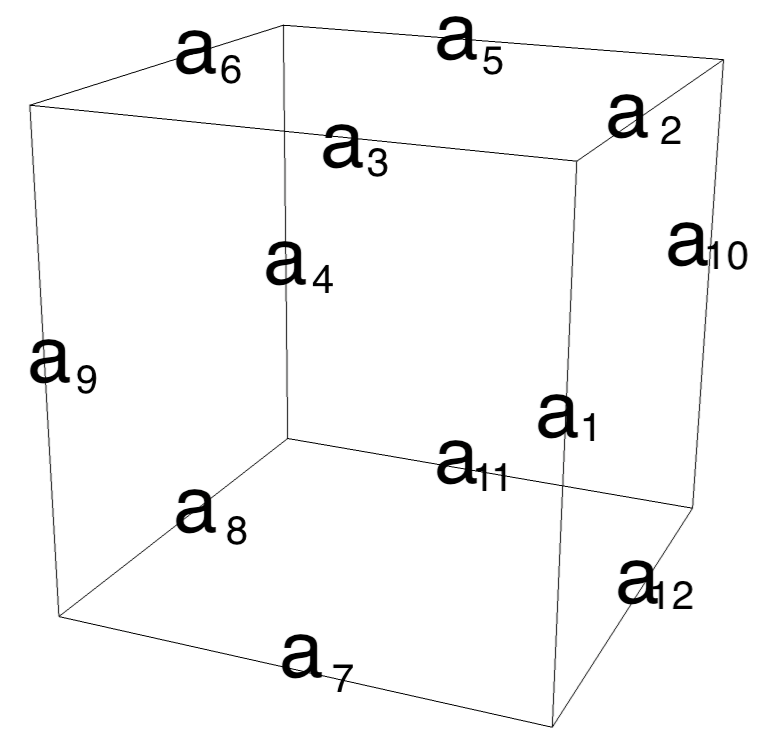}
         \caption{}
     \label{cu}

        \end{center}
      \end{minipage}

  \hspace{0.3cm} 
      \begin{minipage}{0.4\hsize}
        \begin{center}
          \includegraphics[clip, width=5cm]{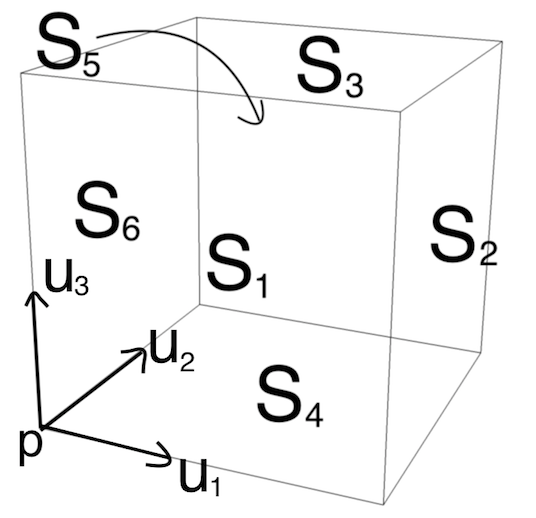}
         \caption{The faces $S_1$, $S_4$ and $S_6$ share the vertex $p$. Each face $S_1$, $S_4$ and $S_6$ is paralleled to $S_5$, $S_3$ and $S_2$ respectively.}
       \label{cuu}
    
        \end{center}
      \end{minipage}

    \end{tabular}

  \end{center}
\end{figure}
\begin{figure}[h]
  \begin{center}
    \begin{tabular}{c}

      \begin{minipage}{0.4\hsize}
        \begin{center}
          \includegraphics[clip, width=5cm]{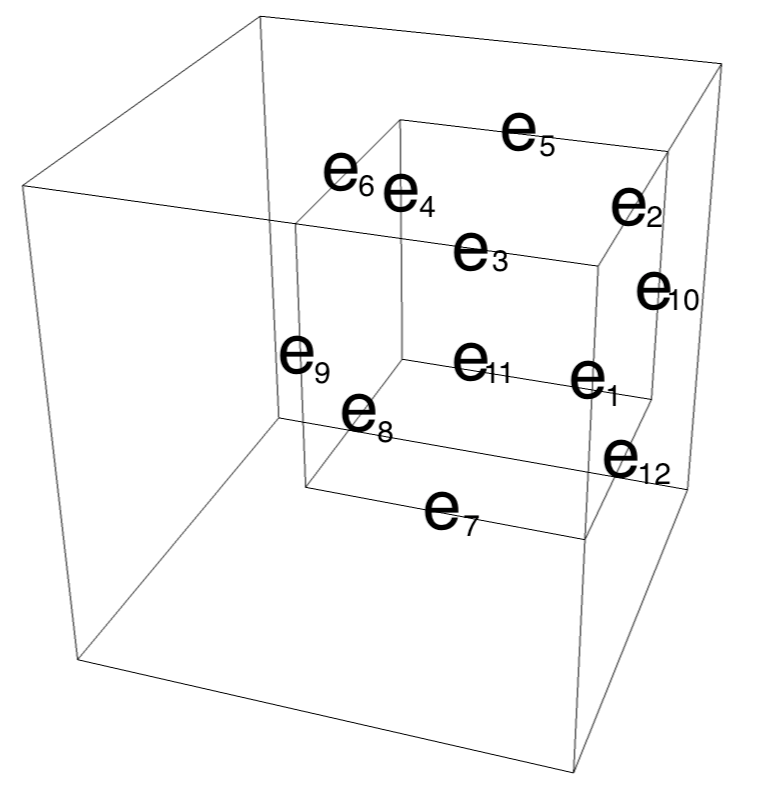}
         \caption{The edges $e_1$, $e_2$ and $e_3$ share a vertex, the edges $e_4$, $e_5$ and $e_6$ share a vertex, the edges $e_7$, $e_8$ and $e_9$ share a vertex and the edges $e_{10}$, $e_{11}$ and $e_{12}$ share a vertex.}
     \label{g1}

        \end{center}
      \end{minipage}
\hspace{0.3cm}
   
      \begin{minipage}{0.4\hsize}
        \begin{center}
          \includegraphics[clip, width=5cm]{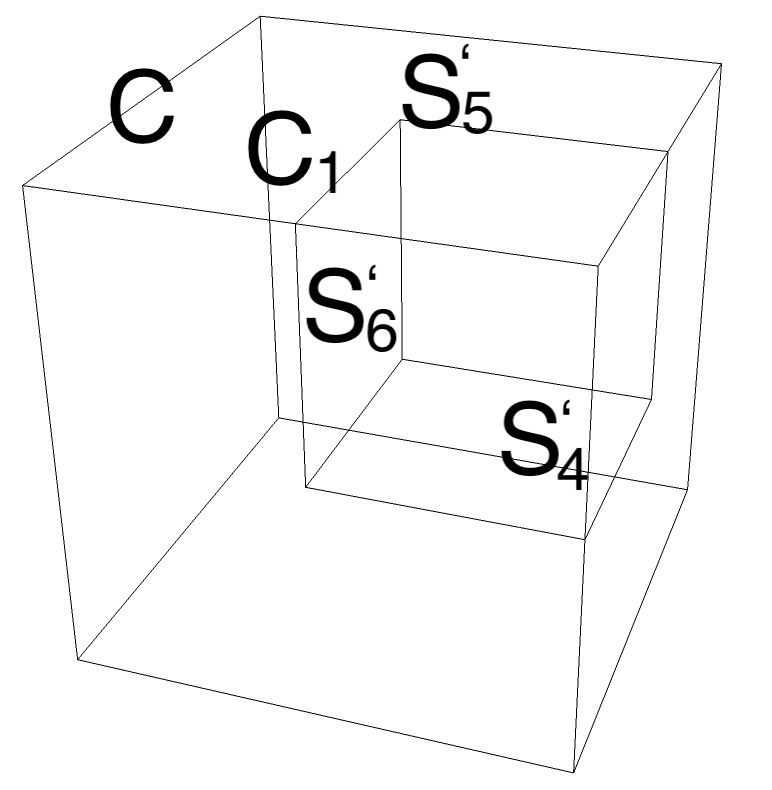}
         \caption{The bigger cube is $C$ and the smaller cube is $C_1$.}
       \label{g2}
    
        \end{center}
      \end{minipage}

    \end{tabular}

  \end{center}
\end{figure}
We now give the proof of Proposition \ref{decide3}.
\begin{pf*}[proof of Proposition \ref{decide3}]
Fix $c\in (0,1), P\in{SO(3)}\backslash{\mathcal{P}_6}$. The proof is done by contradiction. Suppose $A(c,P)$ is an IC of a cube $C$. By Remark \ref{rem}, $A\subset{C}$ and there exists a point of ${A}$ on each edge of $C$. By Lemma {\ref{ho2}}, $C$ is a normal cube. Let $a_1,...,a_{12}$ be points of $A$ such that for each edge $e$ of $C$, there exists $i$ with $a_i\in e$(See Figure \ref{cu}). Let $S_1,...,S_6$ be faces of $C$ and let $u_1,u_2,u_3$ orthonormal vectors parallel to $x,y$ and $z$ axis respectively as in Figure \ref{cuu}. By Lemma \ref{ho3}, it follows that $\{a_1,a_2,a_3\}\subset{A_1},\{a_4,a_5,a_6\}\subset{A_2},\{a_7,a_8,a_9\}\subset{A_3}$ and $\{a_{10},a_{11},a_{12}\}\subset{A_4}$.

To prove Proposition \ref{decide3}, we use the following claim.

\underline{Claim$2$} Let $C_1$ be the cube which is enclosed by $S_1, S_2, S_3, {S_4}^{\prime}=S_4+v_1-v_3, {S_5}^{\prime}=S_5+v_1-v_2, {S_6}^{\prime}=S_6+v_1-v_2$. Then $A_1=f_1(A)\subset C_1$ and there exists a point of $A_1$ on each edge of $C_1$ .

We show Claim2. Note that $C_1$ is well-defined since $a_7+v_1-v_3\in A_1\subset C$, $a_5+v_1-v_2\in A_1\subset C$ and $a_6+v_1-v_2\in A_1\subset C$. Note that vectors $v_1-v_2,v_1-v_3,v_1-v_4$ are parallel to $S_3,S_1,S_2$ respectively and the cube which is enclosed by $S_1, S_2, S_3, {S_4}^{\prime}, {S_5}^{\prime}, {S_6}^{\prime}$ is equal to the cube which is enclosed by $S_1, S_2, S_3, {S_4}^{\prime\prime}=S_4+v_1-v_4,{S_5}^{\prime},{S_6}^{\prime}$.

We denote by $\{e_i\}_{i=1}^{12}$ the edges of $C_1$ as in Figures \ref{g1}, \ref{g2}.

For each $v\in \mR$, we denote by $x(v), y(v)$ and $z(v)$  first, second and third coordinate of $v$ respectively. For example, $x(v_1)=1, y(v_2)=-1$. Since $\{a_4,a_5,a_6\}\subset{A_2}$, $\{a_4,a_5,a_6\}+v_1-v_2\subset{A_1}$.

Since $a_6\in A_2\subset C,$ we have that $a_6+v_1-v_2\in{A_1}\subset C.$ Hence we have that 
\begin{align}
a_6+v_1-v_2\in{e_6}.
\end{align} 

Similarly, we have that 
\begin{align}
a_5+v_1-v_2\in{e_5}.
\end{align}

Similarly, since $\{a_7,a_8,a_9\}+v_1-v_3\subset{A_1}$, $\{a_{10},a_{11},a_{12}\}+v_1-v_4\subset{A_1}$, we have that 
\begin{align}
a_7+v_1-v_3\in{e_7}, a_9+v_1-v_3\in{e_9}, a_{10}+v_1-v_4\in{e_{10}}, a_{12}+v_1-v_4\in{e_{12}}.
\end{align}

Next, we prove that 
\begin{align}
a_4+v_1-v_2\in{e_4}.
\end{align} 
In order to prove (5), suppose that $a_4+v_1-v_2\notin{e_4}$. We have that $a_4+v_1-v_2$ belongs to the negative side of ${S_4}^{\prime}$ with respect to $u_3$. By (4), we have that $z(a_4+v_1-v_2)<z(a_{12}+v_1-v_4)$. Hence we have that $z(a_4+v_4-v_2)<z(a_{12})$, but this contradicts $a_4+v_4-v_2\in{A_4}\subset{C}$. Hence we have proved (5).

Similarly, we have that 
\begin{align}
a_8+v_1-v_3\in{e_8}.
\end{align} 
For, suppose that $a_8+v_1-v_3\notin{e_8}$. We have that $a_8+v_1-v_3$ belongs to the positive side of ${S_5}^{\prime}$ with respect to $u_2$. By (4), we have that $y(a_8+v_1-v_3)>y(a_{10}+v_1-v_4)$. Hence we have that $y(a_8+v_4-v_3)>y(a_{10})$, but this contradicts $a_8+v_4-v_3\in{A_4}\subset{C}$. Hence we have proved (6).

Similarly, we have that 
\begin{align}
a_{11}+v_1-v_4\in{e_{11}}.
\end{align} 
For, suppose that $a_{11}+v_1-v_4\notin{e_{11}}$. We have that $a_{11}+v_1-v_4$ belongs to the negative side of ${S_6}^{\prime}$ with respect to $u_1$. By (4), $x(a_{11}+v_1-v_4)<x(a_9+v_1-v_3)$. Hence we have that $x(a_{11}+v_3-v_4)<x(a_{9})$, but this contradicts $a_{11}+v_3-v_4\in{A_3}\subset{C}$. Hence we have proved (7).

Next, we prove that 
\begin{align}
a_1\in{e_1}. 
\end{align}
In order to prove (8), suppose that $a_1\notin{e_1}$. We have that $a_1$ belongs to the negative side of ${S_4}^{\prime}$ with respect to $u_3$. By (4),  $z(a_1)<z(a_7+v_1-v_3)$. Hence we have that $z(a_1+v_3-v_1)<z(a_7)$, but this contradicts $a_1+v_3-v_1\in{A_3}\subset{C}$. Hence we have proved (8).

Similarly, we have that 
\begin{align}
a_2\in{e_2}.
\end{align}
For, suppose that $a_2\notin{e_2}$. We have that $a_2$ belongs to the positive side of ${S_5}^{\prime}$ with respect to $u_2$. By (3), $y(a_2)>y(a_5+v_1-v_2)$. Hence we have that $y(a_2+v_2-v_1)>y(a_5)$, but this contradicts $a_2+v_2-v_1\in{A_2}\subset{C}$. Hence we have proved (9).

Similarly, we have that 
\begin{align}
a_3\in{e_3}.
\end{align}
For, suppose that $a_3\notin{e_3}$. We have that $a_3$ belongs to the negative side of ${S_6}^{\prime}$ with respect to $u_1$. By (2), $x(a_3)<x(a_6+v_1-v_2)$. Hence we have that $x(a_3+v_2-v_1)<x(a_6)$, but this contradicts $a_3+v_2-v_1\in{A_2}\subset{C}$. Hence we have proved (10).

Hence there exists a point of $A_1$ on each edge of $C_1$.

Finally we show $A_1\subset C_1$. Suppose that there exists a point $a\in A_1$ such that $a\notin C_1$. Since $A_1\subset C$, the point $a$ belongs to 
\begin{itemize}
\item[(I)] the negative side of ${S_4}^{\prime}=S_4+v_1-v_3$ with respect to $u_3$, or

\item[(II)] the positive side of ${S_5}^{\prime}=S_5+v_1-v_2$ with respect to $u_2$, or

\item[(III)] the negative side of ${S_6}^{\prime}=S_6+v_1-v_2$ with respect to $u_1$.
\end{itemize}
We now consider case (I). By (4), we have that $z(a_7+v_1-v_3)>z(a)$. Hence $z(a_7)>z(a+v_3-v_1)$. But this contradicts $a+v_3-v_1\in{A_3}\subset{C}$.

We next consider case (II). By (4), we have that $y(a_{10}+v_1-v_4)<y(a)$. Hence $y(a_{10})<y(a+v_4-v_1)$. But this contradicts $a+v_4-v_1\in{A_4}\subset{C}$. 

We finally consider case (III). By (2), we have that $x(a_6+v_1-v_2)>x(a)$. Hence $x(a_6)>x(a+v_2-v_1)$. But this contradicts $a+v_2-v_1\in{A_2}\subset{C}$.  Hence we have that $A_1\subset C_1$.

This completes the proof of Claim2. 

By Claim2, it follows that $A={f_1}^{-1}(A_1)\subset {f_1}^{-1}(C_1)$ and there exists a point of $A$ on each edge of ${f_1}^{-1}(C_1)$ . By Lemma \ref{ho2}, ${f_1}^{-1}(C_1)$ is a normal cube. But this contradicts $P\in{SO(3)}\backslash{\mathcal{P}_6}$. Then we have proved our proposition.\qed
\end{pf*}

\section{The properties of FRTs which are imaginary cubes}
We consider FRTs which are imaginary cubes. That is, we consider $A(c,P)$, where $c\in [1/2,1)$ and $P\in \mpo$ (see Theorem \ref{decide}). 

We set $V=\{v_1,v_2,v_3,v_4\}$ and $V^{\prime}=\{v^{\prime}_1,v^{\prime}_2,v^{\prime}_3,v^{\prime}_4\}$. Here, $v^{\prime}_1=(1,1,1), v^{\prime}_2=(-1,-1,1), v^{\prime}_3=(1,-1,-1), v^{\prime}_4=(-1,1,-1)$. Note that the set of vertices of $C_0$ is equal to 
$\{v_1, v_2, v_3, v_4, v^{\prime}_1, v^{\prime}_2, v^{\prime}_3, v^{\prime}_4\}$. We set $C_c=1/({1-c})C_0$, that is, $C_c$ is the cube whose vertices is equal to $\{v_1/(1-c),v_2/(1-c),v_3/(1-c),v_4/(1-c),v^{\prime}_1/(1-c),v^{\prime}_2/(1-c),v^{\prime}_3/(1-c),v^{\prime}_4/(1-c)\}$. There is a proposition about the cardinality of $\{A(c,P)| P \in \mathcal{P}_6 \}$ for each $c\in(0,1)$.
\begin{pro}
For each $c \in (0,1)$, the cardinality of $\{A(c,P)| P \in \mathcal{P}_6 \}$ is 2.
\end{pro}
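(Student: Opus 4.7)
The plan is to split $\mathcal{P}_6$ into the two cosets of its index-$2$ subgroup $\mathcal{P}_4$, show that $A(c,P)$ is constant on each coset, and verify that the two resulting attractors are distinct. For the structural input I would recall that every $P\in\mathcal{P}_6$ permutes the eight vertices $V\cup V^{\prime}$ of $C_0$, and since $T_0$ and the regular tetrahedron spanned by $V^{\prime}$ are the only two regular tetrahedra inscribed in $C_0$, the elements of $\mathcal{P}_4$ permute $V$ (and $V^{\prime}$) separately, while every element of $\mathcal{P}_6\setminus\mathcal{P}_4$ interchanges $V$ and $V^{\prime}$.

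For the coset-wise constancy, I apply Remark \ref{ad}: every point of $A(c,P)$ has the form $v_{\omega_1}+\sum_{i=1}^{\infty}c^iP^iv_{\omega_{i+1}}$. If $P\in\mathcal{P}_4$, then each $P^i\in\mathcal{P}_4$ acts as a permutation of $V$, so as $\omega$ ranges over $\{1,2,3,4\}^{\infty}$ the vectors $P^iv_{\omega_{i+1}}$ range independently over $V$; hence $A(c,P)=A(c,E)$. If instead $P\in\mathcal{P}_6\setminus\mathcal{P}_4$, then $P^i$ is a bijection from $\{1,2,3,4\}$ onto $V$ for even $i$ and onto $V^{\prime}$ for odd $i$, so $A(c,P)$ equals
\[
\Bigl\{\, v_{\omega_1} + \sum_{i\ge 1,\ i\ \text{even}} c^i v_{\tau_i} + \sum_{i\ge 1,\ i\ \text{odd}} c^i v^{\prime}_{\tau_i} \;:\; \omega_1,\tau_i\in\{1,2,3,4\}\,\Bigr\},
\]
a set independent of the specific $P$. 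Hence $\{A(c,P):P\in\mathcal{P}_6\}$ has at most two members.

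To see the two members are distinct, I would use that $T_0/(1-c)$ is convex and contains each $v_j/(1-c)$, so every $f_i(x)=cx+v_i$ preserves it; consequently $A(c,E)\subset T_0/(1-c)$. On the other hand, for a concrete $P\in\mathcal{P}_6\setminus\mathcal{P}_4$, take the $\pi/2$-rotation about the $z$-axis, which satisfies $Pv_1=(1,1,1)=v^{\prime}_1$. The fixed point $p=(I-cP)^{-1}v_1$ of $f_1$ lies in $A(c,P)$, and a direct calculation gives
\[
p=\Bigl(\frac{1+c}{1+c^{2}},\ \frac{c-1}{1+c^{2}},\ \frac{1}{1-c}\Bigr);
\]
one then checks $(1-c)(p_1+p_2+p_3)=1+2c(1-c)/(1+c^{2})>1$ for every $c\in(0,1)$, violating the face inequality $x+y+z\le 1/(1-c)$ of $T_0/(1-c)$. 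Hence $p\notin T_0/(1-c)$, so $A(c,E)\neq A(c,P)$ and the cardinality is exactly $2$.

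The main obstacle is this last step: the coset-wise constancy follows routinely from the address formula, but producing a certificate that $A(c,E)\neq A(c,P)$ uniformly in $c\in(0,1)$ requires exhibiting an explicit point of $A(c,P)$ outside the tetrahedron $T_0/(1-c)$, and the fixed-point computation above is what supplies it.
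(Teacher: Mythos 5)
Your proof is correct. The coset decomposition of $\mathcal{P}_6$ into $\mathcal{P}_4$ and $\mathcal{P}_6\setminus\mathcal{P}_4$ and the constancy argument via Remark \ref{ad} are essentially identical to the paper's. Where you diverge is in certifying $A(c,P_1)\neq A(c,P_2)$: the paper exhibits the point $v_1/(1-c)\in A(c,P_1)$ (the fixed point of $f_1$ when $P=E$) and shows by a coordinate-by-coordinate analysis of its address representation that it cannot lie in $A(c,P_2)$; you instead take the fixed point of $f_1$ for a concrete $P_2$, namely $p=(I-cP_2)^{-1}v_1$, and show it violates the face inequality $x+y+z\le 1/(1-c)$ of the tetrahedron $T_0/(1-c)$, which contains $A(c,P_1)$ by invariance. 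Both are roughly the same amount of computation, but your version is more geometric: it replaces the case analysis on infinite address sequences with a single half-space separation, and the containment $A(c,E)\subset T_0/(1-c)$ is easy to verify directly since each $f_i$ maps the tetrahedron into itself. One could say the two proofs are dual choices of separating witness --- the fixed point of $f_1$ on one side or the other of the coset divide --- with your side admitting the cleaner exclusion argument because the convex hull of $T(c)$ is a simplex.
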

\begin{proof}
Fix $c\in (0,1)$, $P_1\in \mathcal{P}_4$ and $P_2\in \mathcal{P}_6\backslash \mathcal{P}_4$. Since $P_1$ preserves $T_0$, $P_1V=V$. Since $P_2$ preserves $C_0$ but does not preserve $T_0$, we have $P_2V=V^{\prime}, P_2V^{\prime}=V$ and $P_2^{2n} \in \mathcal{P}_4$ for all $n$. Hence we get the following equations. 

\begin{align*}A(c,P_1)
&=\{\sum_{i=0}^{\infty}(cP_1)^{i}v_{\omega_{i+1}}|\omega=\omega_1\omega_2\cdots\in{\{1,2,3,4\}^{\infty}}\}\\
&=\{\sum_{i=0}^{\infty}c^{i}v_{\tau_{i+1}}|\tau=\tau_1\tau_2\cdots\in{\{1,2,3,4\}^{\infty}}\},\\
A(c,P_2)
&=\{\sum_{i=0}^{\infty}(cP_2)^{i}v_{\omega_{i+1}}|\omega=\omega_1\omega_2\cdots\in{\{1,2,3,4\}^{\infty}}\}\\
&=\{\sum_{i=0}^{\infty}(cP_2)^{2i}v_{\omega_{2i+1}}+\sum_{i=0}^{\infty}(cP)^{2i+1}v_{\omega_{2(i+1)}}|\omega=\omega_1\omega_2\cdot\cdot\cdot\in{\{1,2,3,4\}^{\infty}}\}\\
&=\{\sum_{i=0}^{\infty}c^{2i}v_{\tau_{2i+1}}+\sum_{i=0}^{\infty}c^{2i+1}v^{\prime}_{\tau_{2(i+1)}}|\tau=\tau_1\tau_2\cdots\in{\{1,2,3,4\}^{\infty}}\}.
\end{align*}
Hence $A(c,P_1)$ and $A(c,P_2)$ are not independent of choice of $P_1\in \mathcal{P}_4$ and $P_2\in \mathcal{P}_6\backslash \mathcal{P}_4$ respectively. Then $A(c,P_1)$ has the point $v_1/(1-c)$. For, if we set $\tau=\overline{1}$, $\sum_{i=0}^{\infty}c^{i}v_{\tau_{i+1}}=v_1/(1-c)$. Furthermore, $A(c,P_2)$ does not have the point $v_1/(1-c)$. To prove this, suppose that $A(c,P_2)$ has the point $v_1/(1-c)$. Then there exists $\tau=\tau_1\tau_2\cdots\in{\{1,2,3,4\}^{\infty}}$ such that $\sum_{i=0}^{\infty}c^{2i}v_{\tau_{2i+1}}+\sum_{i=0}^{\infty}c^{2i+1}v^{\prime}_{\tau_{2(i+1)}}=v_1/(1-c)$. For $v\in \mR$, we denote by $x(v), y(v)$ and $z(v)$  the first, second and third coordinate of $v$ respectively. Then 
\begin{align*}
1/(1-c)&=x(v_1/(1-c))\\&=x(\sum_{i=0}^{\infty}c^{2i}v_{\tau_{2i+1}}+\sum_{i=0}^{\infty}c^{2i+1}v^{\prime}_{\tau_{2(i+1)}})\\&=x(\sum_{i=0}^{\infty}c^{2i}v_{\tau_{2i+1}})+x(\sum_{i=0}^{\infty}c^{2i+1}v^{\prime}_{\tau_{2(i+1)}})\\&=\sum_{i=0}^{\infty}c^{2i}x(v_{\tau_{2i+1}})+\sum_{i=0}^{\infty}c^{2i+1}x(v^{\prime}_{\tau_{2(i+1)}})
\end{align*}
If there exists $i$ such that $x(v_{\tau_{2i+1}})=-1$ or $x(v^{\prime}_{\tau_{2i+1}})=-1$, then $\sum_{i=0}^{\infty}c^{2i}x(v_{\tau_{2i+1}})+\sum_{i=0}^{\infty}c^{2i+1}x(v^{\prime}_{\tau_{2(i+1)}})<1/(1-c)$. Then we have that for each $i$, $v_{\tau_{2i+1}}=v_1$ or $v_3$ and $v^{\prime}_{\tau_{2i+1}}=v^{\prime}_1$ or $v^{\prime}_3$.

Similarly, if we consider $y(\sum_{i=0}^{\infty}c^{2i}v_{\tau_{2i+1}}+\sum_{i=0}^{\infty}c^{2i+1}v^{\prime}_{\tau_{2(i+1)}})$, then we have that for each $i$, $v_{\tau_{2i+1}}=v_1$ and $v^{\prime}_{\tau_{2i+1}}=v^{\prime}_3$.

Then
\begin{align*}
1/(1-c)&=z(v_1/(1-c))\\&=z(\sum_{i=0}^{\infty}c^{2i}v_{\tau_{2i+1}}+\sum_{i=0}^{\infty}c^{2i+1}v^{\prime}_{\tau_{2(i+1)}})\\&=z(\sum_{i=0}^{\infty}c^{2i}v_{1}+\sum_{i=0}^{\infty}c^{2i+1}v^{\prime}_{3})\\&=\sum_{i=0}^{\infty}c^{2i}1+\sum_{i=0}^{\infty}c^{2i+1}(-1)\\&<1/(1-c)
\end{align*}
But this is a contradiction. Hence $A(c,P_2)$ does not have the point $v_1/(1-c)$. Hence $A(c,P_1)\neq A(c,P_2)$.
%
%
Then we have proved our remark.
\end{proof}
For each $c$, 
we denote by $T(c)$ the FRT $A(c,P_1)$, where $P_1\in \mpt$ and we denote by $O(c)$ the FRT $A(c,P_2)$, where $P_2\in \mpo \backslash \mpt$(See Figures \ref{H01}, \ref{H91}). 
\begin{figure}[h]
  \begin{center}
    \begin{tabular}{c}

      \begin{minipage}{0.4\hsize}
        \begin{center}
          \includegraphics[clip, width=5cm]{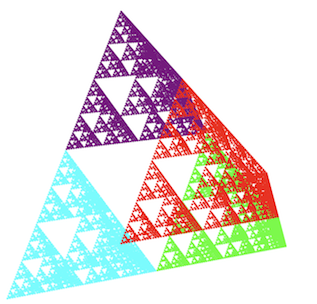}
         \caption{$T(1/2)$}
     \label{H01}

        \end{center}
      \end{minipage}
\hspace{0.3cm}
   
      \begin{minipage}{0.4\hsize}
        \begin{center}
          \includegraphics[clip, width=5cm]{H91.png}
         \caption{$O(1/2)$}
       \label{H91}
    
        \end{center}
      \end{minipage}

    \end{tabular}

  \end{center}
\end{figure}
To consider the convex hulls of $T(c)$ or $O(c)$, we use the following lemmas. Here, for each $A\subset \mR$, $\overline{A}$ denotes the closure of $A$ with respect to Euclidean topology and the convex hull of $A$ is defined as the set $\overline{\bigcup_{j=1}^{\infty}\bigcup_{p_1,p_2,...,p_j\in A}\{\sum_{i=1}^jq_ip_i|0\le q_1,q_2,...,q_j, \sum_{i=1}^jq_i=1\}}$. We denote by $co(A)$ the convex hull of $A$.
\begin{lem}
\label{convex}
Let $\varphi_1,\varphi_2,...,\varphi_n$ be contracting mappings on $\mR$ into $\mR$ defined by $\varphi_i(x)=c(x-p_i)+p_i$, where $p_i\in \mR$ and $c\in(0,1)$ for each $i=1,2,...,n$. Let $A$ be the attractor of $\{\varphi_1, \varphi_2,..., \varphi_n\}$. Then the convex hull of $A$ is equal to the convex hull of $\{p_1, p_2,..., p_n\}$. 
\end{lem}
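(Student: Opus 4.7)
The plan is to establish the equality $co(A)=co(\{p_1,\ldots,p_n\})$ by proving two opposite inclusions. Set $K:=co(\{p_1,\ldots,p_n\})$, which is a non-empty compact convex polytope in $\mR$.

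For the inclusion $K\subseteq co(A)$, I would first observe that each $p_i$ is the unique fixed point of $\varphi_i$, since $\varphi_i(p_i)=c(p_i-p_i)+p_i=p_i$. Because $A$ is non-empty, compact, and satisfies $\varphi_i(A)\subseteq A$, for any $x_0\in A$ the orbit $\varphi_i^k(x_0)$ lies in $A$ and converges to $p_i$ as $k\to\infty$ (since $\varphi_i$ is a contraction with ratio $c<1$). As $A$ is closed, this gives $p_i\in A$ for each $i$, so $\{p_1,\ldots,p_n\}\subseteq A$, and taking convex hulls yields $K\subseteq co(A)$.

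For the reverse inclusion $co(A)\subseteq K$, I would prove that $K$ is invariant under the Hutchinson operator $\Phi(X):=\bigcup_{i=1}^n\varphi_i(X)$. The key observation is that $\varphi_i(x)=cx+(1-c)p_i$, so for any $x\in K$ the point $\varphi_i(x)$ is a convex combination of $x\in K$ and $p_i\in K$ with weights $c$ and $1-c$; by convexity of $K$ it lies in $K$. Hence $\varphi_i(K)\subseteq K$ for each $i$, and thus $\Phi(K)\subseteq K$. Iterating gives $\Phi^m(K)\subseteq K$ for all $m\ge 1$. Since $A$ is the attractor of the IFS, Hutchinson's theorem gives $\Phi^m(K)\to A$ in the Hausdorff metric as $m\to\infty$, and since $K$ is closed the limit satisfies $A\subseteq K$. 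Because $K$ is convex, $co(A)\subseteq K$, completing the proof.

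There is no serious obstacle here: the whole argument reduces to the two observations that (i) each $p_i$ lies in $A$ because it is the fixed point of a contraction that preserves $A$, and (ii) the polytope $K$ is forward-invariant under the IFS because each map is an affine interpolation towards a vertex of $K$. The only minor point to be careful about is confirming that the closure in the definition of convex hull is harmless, which is immediate since the convex hull of a finite set in $\mR$ is already a compact polytope.
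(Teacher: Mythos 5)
Your proof is correct, and it takes a somewhat different route from the paper's. The paper works with explicit addresses: it writes $A$ as the closure of orbit points $\varphi_{\omega_1}\cdots\varphi_{\omega_j}(p_i)$, directly computes $\varphi_{\omega_1}\cdots\varphi_{\omega_j}(p_i)=c^j p_i+\sum_{l=0}^{j-1}c^l(1-c)p_{\omega_{l+1}}$, observes that the coefficients $c^j+\sum_{l=0}^{j-1}c^l(1-c)$ sum to $1$, and concludes each such orbit point lies in $co(\{p_1,\dots,p_n\})$. You instead observe that $\varphi_i(x)=cx+(1-c)p_i$ is a convex combination of $x$ and $p_i$, so the compact convex set $K=co(\{p_1,\dots,p_n\})$ is invariant under the Hutchinson operator $\Phi$, and then invoke the general fact that $\Phi^m(K)\to A$ in Hausdorff metric together with closedness of $K$ to get $A\subseteq K$. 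Both arguments rest on the same geometric observation (each map is an affine interpolation toward a vertex of the simplex), but your packaging via $\Phi$-invariance plus the attractor convergence theorem avoids the explicit induction over words and is the more standard textbook argument. The paper's explicit computation is slightly more self-contained since it does not appeal to Hausdorff convergence, but it requires writing out the coefficient identity. Both directions of your inclusion chain are sound, including the minor point that the orbit $\varphi_i^k(x_0)$ of any $x_0\in A$ stays in $A$ and converges to the fixed point $p_i$, so $p_i\in A$ by closedness.
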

\begin{proof}
We have that 
\begin{align*}
co(\{p_1,...,p_n\})=\overline{\bigcup_{j=1}^{\infty}\bigcup_{\omega_1\omega_2\cdots\omega_j\in \{1,2...,n\}^j}\{\sum_{i=1}^{j}q_{i}p_{\omega_i}|0\le q_1,q_2,...,q_j, \sum_{i=1}^jq_i=1\}}.
\end{align*}
Furthermore, it is well known that 
\begin{align*}
A=\overline{\bigcup_{j=1}^{\infty}\bigcup_{\omega_1\omega_2\cdots\omega_j\in \{1,2...,n\}^j}\{\varphi_{\omega_1}\varphi_{\omega_2}\cdots \varphi_{\omega_j}(v)}\},
\end{align*}

where $v\in A$.

Since $p_1, p_2,..., p_n\in A$, we have that 
\begin{align*}
A=\bigcup_{i=1}^n\overline{\bigcup_{j=1}^{\infty}\bigcup_{\omega_1\omega_2\cdots\omega_j\in \{1,2...,n\}^j}\{\varphi_{\omega_1}\varphi_{\omega_2}\cdots \varphi_{\omega_j}(p_i)}\}.
\end{align*}

For each $\omega_1\omega_2\cdots\omega_j\in \{1,2...,n\}^j,$ we have that 
\begin{align*}
\varphi_{\omega_1}\varphi_{\omega_2}\cdots \varphi_{\omega_j}(p_i)=c^jp_i+\sum_{l=0}^{j-1}c^l(1-c)p_{\omega_{l+1}}.
\end{align*}

Since for each $j$, $c^j+\sum_{l=0}^{j-1}c^l(1-c)=1$, we have that $\varphi_{\omega_1}\varphi_{\omega_2}\cdots \varphi_{\omega_j}(p_i)\in co(\{p_1,...,p_n\})$. Hence $A\subset co(\{p_1,...,p_n\})$. 

Since $p_1, p_2,..., p_n\in A$, the convex hull of $A$ $co(A)$ contains $co(\{p_1,...,p_n\})$.

Then we have proved our lemma.
\end{proof}
\begin{not*}
Let $I=\{1,2,3,4\}.$ For each $c$, we denote by $V(c)$ the set of points $\{(v_i+cv^{\prime}_j)/(1-c^2)\}_{(i,j)\in I^2}$, that is, $V(c)=$ 
\begin{align*}\{
&(\frac{1}{1-c},\frac{-1}{1+c},\frac{1}{1-c}),(\frac{1}{1+c},\frac{-1}{1-c},\frac{1}{1-c}),(\frac{1}{1-c},\frac{-1}{1-c},\frac{1}{1+c}),(\frac{1}{1+c},\frac{-1}{1+c},\frac{1}{1+c}),\\
&(\frac{-1}{1-c},\frac{1}{1+c},\frac{1}{1-c}),(\frac{-1}{1+c},\frac{1}{1-c},\frac{1}{1-c}),(\frac{-1}{1-c},\frac{1}{1-c},\frac{1}{1+c}),(\frac{-1}{1+c},\frac{1}{1+c},\frac{1}{1+c}),\\
&(\frac{-1}{1-c},\frac{-1}{1+c},\frac{-1}{1-c}),(\frac{-1}{1-c},\frac{-1}{1-c},\frac{-1}{1+c}),(\frac{-1}{1+c},\frac{-1}{1-c},\frac{-1}{1-c}),(\frac{-1}{1+c},\frac{-1}{1+c},\frac{-1}{1+c}),\\
&(\frac{1}{1-c},\frac{1}{1+c},\frac{-1}{1-c}),(\frac{1}{1+c},\frac{1}{1-c},\frac{-1}{1-c}),(\frac{1}{1-c},\frac{1}{1-c},\frac{-1}{1+c}),(\frac{1}{1+c},\frac{1}{1+c},\frac{-1}{1+c})\}.
\end{align*} 

We set 
\begin{align*}
&a_{c,1}=(\frac{1}{1+c},\frac{-1}{1+c},\frac{1}{1+c}), a_{c,2}=(\frac{-1}{1+c},\frac{1}{1+c},\frac{1}{1+c}),\\
& a_{c,3}=(\frac{-1}{1+c},\frac{-1}{1+c},\frac{-1}{1+c}),a_{c,4}=(\frac{1}{1+c},\frac{1}{1+c},\frac{-1}{1+c}).
\end{align*}
Let $C_c=1/(1-c)C_0$. We set $V^{\prime}(c)=V(c)\cap C_c(=V(c)\backslash\{a_{c,1},a_{c,2},a_{c,3},a_{c,4}\})$, that is, $V^{\prime}(c)=$ 
\begin{align*}\{
&(\frac{1}{1-c},\frac{-1}{1+c},\frac{1}{1-c}),(\frac{1}{1+c},\frac{-1}{1-c},\frac{1}{1-c}),(\frac{1}{1-c},\frac{-1}{1-c},\frac{1}{1+c}),\\
&(\frac{-1}{1-c},\frac{1}{1+c},\frac{1}{1-c}),(\frac{-1}{1+c},\frac{1}{1-c},\frac{1}{1-c}),(\frac{-1}{1-c},\frac{1}{1-c},\frac{1}{1+c}),\\
&(\frac{-1}{1-c},\frac{-1}{1+c},\frac{-1}{1-c}),(\frac{-1}{1-c},\frac{-1}{1-c},\frac{-1}{1+c}),(\frac{-1}{1+c},\frac{-1}{1-c},\frac{-1}{1-c}),\\
&(\frac{1}{1-c},\frac{1}{1+c},\frac{-1}{1-c}),(\frac{1}{1+c},\frac{1}{1-c},\frac{-1}{1-c}),(\frac{1}{1-c},\frac{1}{1-c},\frac{-1}{1+c})\}.
\end{align*} 
In Figure \ref{V}, the set of red points is equal to $V^{\prime}(c)$ and the set of blue points is equal to $\{a_{c,1},a_{c,2},a_{c,3},a_{c,4}\}$.
\end{not*}
\begin{figure}[h]
  \begin{center}
    \begin{tabular}{c}

      \begin{minipage}{0.4\hsize}
        \begin{center}
          \includegraphics[clip, width=5cm]{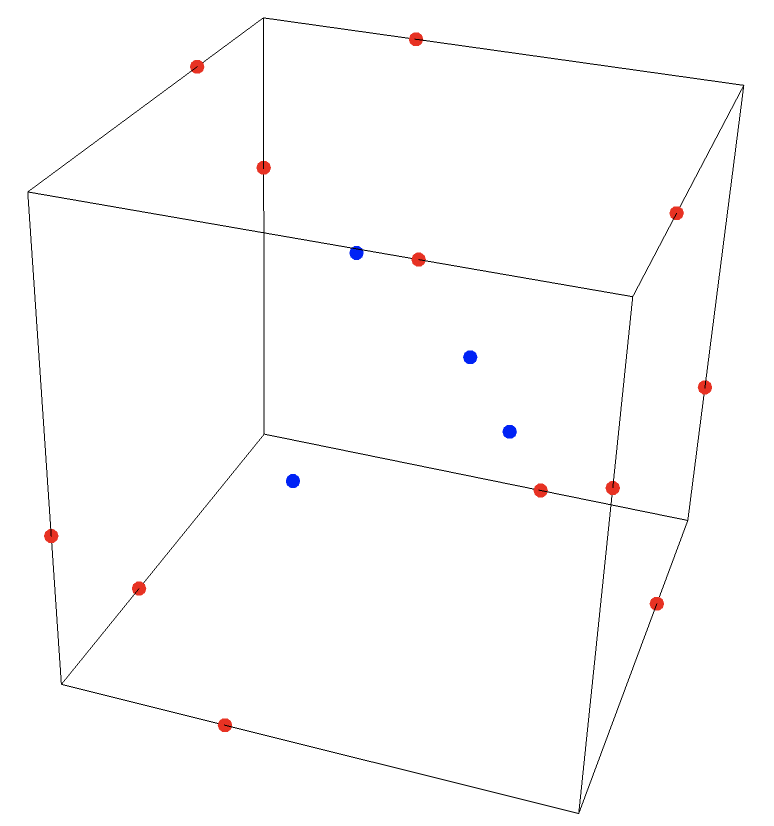}
         \caption{$V(1/2)$}
     \label{V}

        \end{center}
      \end{minipage}
\hspace{0.3cm}
   
      \begin{minipage}{0.4\hsize}
        \begin{center}
          \includegraphics[clip, width=5cm]{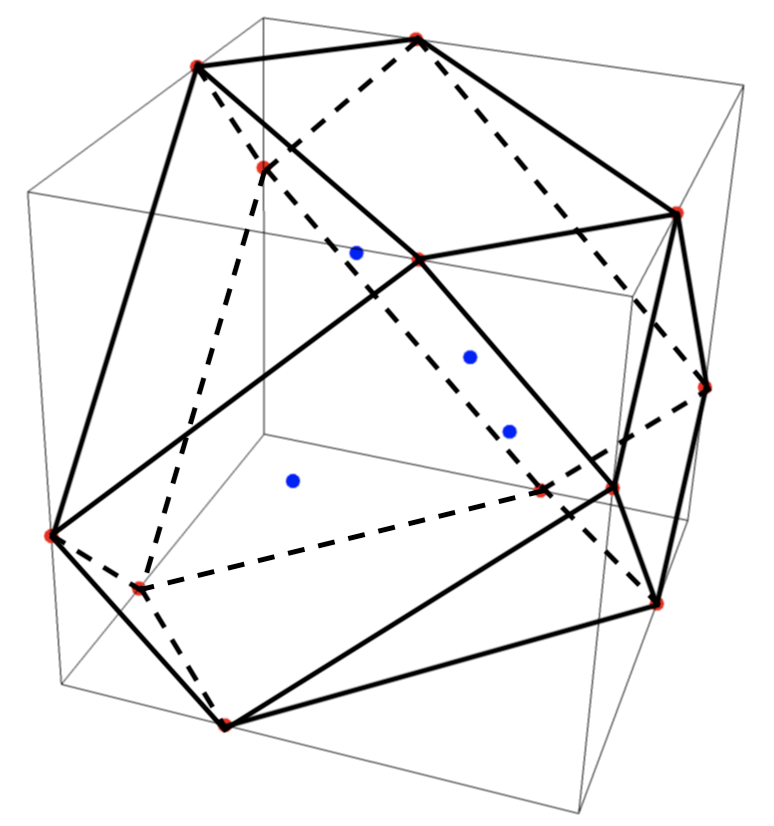}
         \caption{$co(V(1/2))$}
       \label{V1}
    
        \end{center}
      \end{minipage}

    \end{tabular}

  \end{center}
\end{figure}
\begin{lem}
\label{vert}
The convex hull of $V(c)$ is equal to the polyhedron whose vertices are $V^{\prime}(c)$. 
\end{lem}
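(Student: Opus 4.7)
The plan is to prove both inclusions in $\mathrm{co}(V(c))=\mathrm{co}(V'(c))$ together with the statement that the $12$ points of $V'(c)$ are exactly the extreme points of this common convex hull. The inclusion $\mathrm{co}(V'(c))\subset\mathrm{co}(V(c))$ is immediate from $V'(c)\subset V(c)$. So the real content is to show (a) each of the four extra points $a_{c,i}\in V(c)\setminus V'(c)$ lies in $\mathrm{co}(V'(c))$, and (b) each point of $V'(c)$ is an extreme point of $\mathrm{co}(V(c))$.

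For (a), I would locate each $a_{c,i}$ on the segment joining two suitable elements of $V'(c)$. The key observation is that $a_{c,1}=(1/(1+c),-1/(1+c),1/(1+c))$ shares its $x$-coordinate $1/(1+c)$ with exactly two points of $V'(c)$, namely $p:=(1/(1+c),-1/(1-c),1/(1-c))$ (from the cluster near $v_1$) and $q:=(1/(1+c),1/(1-c),-1/(1-c))$ (from the cluster near $v_4$); geometrically, $p$ and $q$ are two opposite corners of the square cross-section of $C_c$ at $x=1/(1+c)$, and $a_{c,1}$ sits on the diagonal they span. A direct coordinate check (using $(c-1)/((1+c)(1-c))=-1/(1+c)$ for the second coordinate and its analogue for the third) then verifies
\[
a_{c,1}=\tfrac{1}{1+c}\,p+\tfrac{c}{1+c}\,q,
\]
so $a_{c,1}\in\mathrm{co}(V'(c))$. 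Because the tetrahedral group $\mathcal{P}_4$ permutes the four clusters of $V(c)$ while stabilizing $V'(c)$ and the set $\{a_{c,1},a_{c,2},a_{c,3},a_{c,4}\}$, applying appropriate rotations in $\mathcal{P}_4$ yields analogous two-term convex combinations for $a_{c,2}, a_{c,3}, a_{c,4}$.

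For (b), note that every $p\in V'(c)$ has exactly two coordinates of magnitude $1/(1-c)$ and a third of magnitude $1/(1+c)$. Letting $\{i,j\}\subset\{1,2,3\}$ index the two ``extreme'' coordinates of $p$, with signs $\epsilon_i,\epsilon_j\in\{\pm1\}$, the linear functional $f(x_1,x_2,x_3)=\epsilon_i x_i+\epsilon_j x_j$ is bounded above on $V(c)$ by $2/(1-c)$, and equality forces $\epsilon_i x_i=\epsilon_j x_j=1/(1-c)$. A direct inspection of the explicit 16-point list defining $V(c)$ shows that $p$ itself is the unique element satisfying both equalities, so $p$ is an extreme point of $\mathrm{co}(V(c))$. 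Combined with (a), this shows $V'(c)$ is precisely the vertex set of the polyhedron $\mathrm{co}(V(c))=\mathrm{co}(V'(c))$.

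The main obstacle is purely computational: one has to spot the correct pair $(p,q)\in V'(c)\times V'(c)$ expressing each $a_{c,i}$ as a convex combination. Once this pair is identified via the matching coordinate and the diagonal configuration described above, the remaining verifications reduce to routine arithmetic and a finite inspection of the 16-point list $V(c)$.
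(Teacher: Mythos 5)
Your proposal is correct, and it reaches the same conclusion by a genuinely different route in both halves of the argument. For the inclusion $a_{c,1}\in co(V'(c))$, the paper expresses $a_{c,1}$ as a convex combination $\tfrac{1}{1+c}d_1+\tfrac{c}{1+c}d_2$ where $d_1,d_2$ are each centroids of three points of $V'(c)$, so six points of $V'(c)$ are used; you instead notice that $a_{c,1}=v_1/(1+c)$ lies on the diagonal of the square slice $\{x=1/(1+c)\}\cap C_c$ joining the two points $p,q\in V'(c)$ with that first coordinate, and you verify $a_{c,1}=\tfrac{1}{1+c}p+\tfrac{c}{1+c}q$ directly — a leaner two-point combination. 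Both use $\mathcal{P}_4$-symmetry to handle $a_{c,2},a_{c,3},a_{c,4}$. For the extremality of each $v\in V'(c)$, the paper argues geometrically: each point of $V'(c)$ sits on a distinct edge of $C_c$, and since edges are faces of the polytope $C_c$, no such point can be a convex combination of the other eleven. You instead produce an explicit supporting linear functional $f=\epsilon_i x_i+\epsilon_j x_j$ that attains its maximum $2/(1-c)$ on $V(c)$ uniquely at $v$; the uniqueness follows from the parity constraint (the product of coordinate signs of $v_i\in V$ is fixed), which forces the sign of the remaining coordinate, though you leave that last step to ``inspection of the 16-point list.'' The paper's edge argument is more visual and avoids computing functionals; yours is more algebraic and makes the supporting hyperplane explicit, which some readers may find more self-contained. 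Both are valid.
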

\begin{proof}
We show that $co(V(c))\subset co(V^{\prime}(c))$.
We have that 
\begin{align*}
d_1:=\frac{1}{3}[(\frac{1}{1-c},\frac{-1}{1+c},\frac{1}{1-c})+(\frac{1}{1+c},\frac{-1}{1-c},\frac{1}{1-c})+(\frac{1}{1-c},\frac{-1}{1-c},\frac{1}{1+c})]\in co(V^{\prime}(c))
\end{align*}
and 
\begin{align*}
 d_2:=\frac{1}{3}[(\frac{-1}{1-c},\frac{1}{1-c},\frac{1}{1+c})+(\frac{-1}{1-c},\frac{-1}{1+c},\frac{-1}{1-c})+(\frac{1}{1+c},\frac{1}{1-c},\frac{-1}{1-c})]\in co(V^{\prime}(c)).
\end{align*}

Then 
\begin{align*}
a_{c,1}=\frac{1}{1+c}d_1+\frac{c}{1+c}d_2\in co(V^{\prime}(c)).
\end{align*}
Similarly, we have that $a_{c,2}, a_{c,3}, a_{c,4}\in co(V^{\prime}(c)).$ Hence we have that $co(V(c))\subset co(V^{\prime}(c))$.  

We fix each point $v\in V^{\prime}(c)$. Since the point $v$ belongs to an edge $e$ of $C_c$ and the other points in $V^{\prime}(c)$ belong to the edges which are different from $e$, we have that $v$ does not belong to the convex hull of $V^{\prime}(c)\backslash \{v\}$(See Figure \ref{V1}). Hence the point $v$ is a vertex of $co(V^{\prime}(c))=co(V(c)).$

Then we have proved our lemma.
\end{proof}
We now prove the following results.
\begin{thm}Let $c\in (0,1)$.
\begin{enumerate}
\renewcommand{\labelenumi}{(\arabic{enumi})}
\item The convex hull of $T(c)$ is a regular tetrahedron. The convex hull of $O(c)$ is a polyhedron like a cuboctahedron whose vertices are $V^{\prime}(c)$ (See the Figure \ref{a111}). 
\item $symA(c,P)= \mathcal{P}_4$.
\end{enumerate}
\end{thm}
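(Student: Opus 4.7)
The plan is to split into the two cases $A(c,P) = T(c)$ (when $P \in \mathcal{P}_4$) and $A(c,P) = O(c)$ (when $P \in \mathcal{P}_6 \setminus \mathcal{P}_4$), in each case exhibiting the attractor as the limit set of a purely homothetic IFS (different from the original one) so that Lemmas~\ref{convex} and~\ref{vert} apply. The symmetry statement (2) is then extracted from the shape of the convex hull together with an asymmetry argument.

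For $T(c)$, the representation
\begin{align*}
T(c) = \{\sum_{i=0}^{\infty} c^{i} v_{\tau_{i+1}} \mid \tau \in I^{\infty}\}
\end{align*}
derived immediately before the theorem exhibits $T(c)$ as the attractor of the homothetic IFS $\{x \mapsto cx + v_i\}_{i=1}^{4}$, whose fixed points are $\{v_i/(1-c)\}_{i=1}^{4}$. Lemma~\ref{convex} then identifies $co(T(c))$ with the regular tetrahedron $T_0/(1-c)$. For the symmetry, the same formula directly shows $\mathcal{P}_4 \subseteq symT(c)$, since any $Q \in \mathcal{P}_4$ merely permutes the vectors $v_{\tau_{i+1}}$; conversely, $symT(c) \subseteq sym(co(T(c))) = sym(T_0/(1-c)) = \mathcal{P}_4$.

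For $O(c)$, the analogous formula
\begin{align*}
O(c) = \{\sum_{i=0}^{\infty} c^{2i} v_{\tau_{2i+1}} + \sum_{i=0}^{\infty} c^{2i+1} v'_{\tau_{2(i+1)}} \mid \tau \in I^{\infty}\}
\end{align*}
displays $O(c)$ as the attractor of the sixteen-map homothetic IFS $g_{j,k}(x) = c^{2} x + v_j + c v'_k$ indexed by $(j,k) \in I^2$, whose fixed points are exactly the sixteen elements $(v_j + c v'_k)/(1-c^2)$ of $V(c)$. Lemma~\ref{convex} then gives $co(O(c)) = co(V(c))$, and Lemma~\ref{vert} identifies this with the cuboctahedron-like polyhedron whose vertex set is $V'(c)$, proving (1). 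The same formula also yields $\mathcal{P}_4 \subseteq symO(c)$, since each $Q \in \mathcal{P}_4$ permutes $V$ and $V'$ separately.

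The decisive step, and the main obstacle, is the reverse inclusion in the $O(c)$-case: ruling out every $Q \in \mathcal{P}_6 \setminus \mathcal{P}_4$ from $symO(c)$. The idea is the following asymmetry observation. Each of the twelve edges of $C_c = C_0/(1-c)$ contains exactly one point of $V'(c)$, and a direct calculation shows that this point divides the edge in ratio $c/(1+c):1/(1+c)$ with the short piece adjacent to the endpoint lying in $V/(1-c)$. Since $c/(1+c) < 1/2$ for every $c \in (0,1)$, each point of $V'(c)$ is strictly closer to a $V$-vertex of $C_c$ than to a $V'$-vertex. Any $Q \in \mathcal{P}_6 \setminus \mathcal{P}_4$ fixes $C_c$ setwise but swaps $V$ and $V'$, and therefore maps each element of $V'(c)$ to a point strictly closer to a $V'$-vertex, producing a set disjoint from $V'(c)$. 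Hence $sym(co(V'(c))) = \mathcal{P}_4$, whence $symO(c) \subseteq sym(co(O(c))) = \mathcal{P}_4$, completing the proof.
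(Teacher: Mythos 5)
Your argument follows the paper's route almost exactly: realize $T(c)$ and $O(c)$ as attractors of homothetic IFSs (you read the sixteen homotheties directly off the series formula in Proposition 4.1, while the paper chooses an involutive $P\in\mathcal{P}_6\setminus\mathcal{P}_4$ and sets $g_{ij}=f_i\circ f_j$ --- these are the same sixteen maps, with the same fixed points $V(c)$), apply Lemmas \ref{convex} and \ref{vert}, verify $\mathcal{P}_4\subseteq symA(c,P)$ from the address representation, and finally exclude extra symmetries by examining the convex hull. The only substantive difference is the exclusion step for $O(c)$: the paper partitions $C_c$ into eight octants and observes that $V'(c)$ lies only in the four octants at the $V$-corners, whereas you invoke the edge-division ratio $c:1$; these are two phrasings of the same geometric fact, namely that each point of $V'(c)$ sits on the $V$-side of its edge of $C_c$.

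One caveat, which in fairness is also present in the paper's own write-up: from $Q\,co(O(c))=co(O(c))$ you deduce $QV'(c)=V'(c)$, then exclude $Q\in\mathcal{P}_6\setminus\mathcal{P}_4$, and conclude $sym(co(V'(c)))=\mathcal{P}_4$. This implicitly assumes $sym(co(V'(c)))\subseteq\mathcal{P}_6$; a priori it is not excluded that some $Q\in SO(3)\setminus\mathcal{P}_6$ permutes $V'(c)$. The missing step is easy to supply: $co(V'(c))$ has exactly six quadrilateral faces, one inscribed in each face of $C_c$, with outward normals $\pm e_1,\pm e_2,\pm e_3$, together with eight triangular faces; any rotational symmetry must send quadrilateral faces to quadrilateral faces, hence permutes $\{\pm e_1,\pm e_2,\pm e_3\}$, hence lies in $\mathcal{P}_6$. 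Once this is inserted, your $\mathcal{P}_6\setminus\mathcal{P}_4$ exclusion finishes the argument, and your proof is, modulo that remark, essentially the paper's.
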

\begin{proof}
\begin{enumerate}
\renewcommand{\labelenumi}{(\arabic{enumi})}
\item Fix $c\in (0,1)$.
As can be seen in Proposition 4.1, we have that $T(c)=A(c,E)$ and $O(c)=A(c,P)$, where $E$ is the identity matrix and $P$ is the $\pi$ rotation matrix around $l$. Here, $l$ is the unique line containing the origin and $(0,1,1)$. Note that $P^2=E$.

We consider $T(c)$. Note that $T(c)$ is the attractor of $\{f_i\}_{i\in I}$, where $f_i(x)=c(x-v_i/(1-c))+v_i/(1-c)$. By Lemma \ref{convex}, the convex hull of $T(c)$ is equal to the convex hull of $\{v_1/(1-c),v_2/(1-c),v_3/(1-c), v_4/(1-c)\}$, that is, a regular tetrahedron. 

We now consider $O(c)$. For each $(i,j)\in I^2$, we define the contracting mapping $g_{ij}:\mR \rightarrow \mR$ by 
\begin{align*}
g_{ij}(x)&=f_i\cdot{f_j}(x)\\&=cP(cPx+v_j)+v_i\\&=c^2x+cPv_j+v_i\\&=c^2(x-({v_i+cPv_j})/({1-c^2}))+({v_i+cPv_j})/({1-c^2}).
\end{align*}
Note that the set of points $\{({v_i+cPv_j})/({1-c^2})\}_{(i,j)\in I^2}$ is equal to $V(c)$.
Since the attractor of the IFS $\{g_{ij}\}_{(i,j)\in I^2}$ is equal to the attractor of the IFS $\{f_i\}_{i\in I}$, $O(c)$ is the attractor of $\{g_{ij}\}_{(i,j)\in I^2}$. By Lemma \ref{convex}, the convex hull of $O(c)$ is equal to the convex hull of $V(c)$. By Lemma \ref{vert}, we have that the convex hull of $O(c)$ is a polyhedron like a cuboctahedron whose vertices are $V^{\prime}(c)$.
Hence we have that the convex hull of $T(c)$ is a regular tetrahedron and the convex hull of $O(c)$ is a polyhedron like a cuboctahedron whose vertices are $V^{\prime}(c)$.

\item Fix $c\in(0,1), P\in{\mathcal{P}_6}$ and $Q\in{\mathcal{P}_4}$. We show that $\mathcal{P}_4\subset{symA}$. For each $x\in{A(c,P)}$, there exists a word $\omega=\omega_0\omega_1\omega_2\cdots\in{I^{\infty}}$ such that $x=\sum_{i=0}^{\infty}(cP)^{i}v_{\omega_{i}}$.
For each $i=1,2,3,...$, we set $v^{\prime}_i:=QP^{i}v_{\omega_i}$. If $P^{i}\in{\mathcal{P}_4}$, then $v^{\prime}_i\in{V}$. If $P^{i}\in{{\mathcal{P}_6}\backslash{\mathcal{P}_4}}$, then $v^{\prime}_i\in{V^{\prime}}$ since $P^{i}V=V^{\prime}$. Let $u=u_0u_1u_2\cdots\in{I^{\infty}}$ be the word such that $v_{u_0}=Qv_{\omega_0}\in{V},v_{u_{i}}:=P^{-i}v^{\prime}_i\in{V}$. Then we have 
\begin{align*}
Qx&=\sum_{i=0}^{\infty}c^iQP^{i}v_{\omega_{i}}\\
&=\sum_{i=0}^{\infty}c^iP^iv_{u_{i}}\in{A(c,P)}.
\end{align*}
Hence we have that $\mathcal{P}_4\subset{symA}$.

We consider the case $A(c,P)=T(c)$.
Suppose that there exists $Q^{\prime}\in SO(3)\backslash \mpt$ such that $Q^{\prime}T(c)=T(c)$. Then $Q^{\prime}co(T(c))=co(T(c))$. Since the vertices of $co(T(c))$ is $\{v_1/(1-c), v_2/(1-c), v_3/(1-c), v_4/(1-c)\}$, we have that $Q^{\prime}\{v_1/(1-c), v_2/(1-c), v_3/(1-c), v_4/(1-c)\}=\{v_1/(1-c), v_2/(1-c), v_3/(1-c), v_4/(1-c)\}$. But this contradicts $Q^{\prime}\in SO(3)\backslash \mpt$. Hence $symT(c)= \mathcal{P}_4$.

We next consider the case $A(c,P)=O(c)$.
Suppose that there exists $Q^{\prime}\in SO(3)\backslash \mpt$ such that $Q^{\prime}O(c)=O(c)$. Then $Q^{\prime}co(O(c))=co(O(c))$.
Since the vertices of $co(O(c))$ is $V^{\prime}(c)$, we have that $Q^{\prime}V^{\prime}(c)=V^{\prime}(c)$. 
We divide $C_c$ into the following eight cubes $C_1, C_2,...,C_8$. Here, 
\begin{align*}
&C_1:=\{(x, y, z)\in C_c|0\le x \le \frac{1}{1-c}, -\frac{1}{1-c}\le y \le 0, 0\le z \le \frac{1}{1-c}\}\\
&C_2:=\{(x, y, z)\in C_c|-\frac{1}{1-c}\le x \le 0, 0\le y \le \frac{1}{1-c}, 0\le z \le \frac{1}{1-c}\}\\
&C_3:=\{(x, y, z)\in C_c|-\frac{1}{1-c}\le x \le 0, -\frac{1}{1-c}\le y \le 0, -\frac{1}{1-c}\le z \le 0\}\\
&C_4:=\{(x, y, z)\in C_c|0\le x \le \frac{1}{1-c}, 0\le y \le \frac{1}{1-c}, -\frac{1}{1-c}\le z \le 0\}\\
&C_5:=\{(x, y, z)\in C_c|0\le x \le \frac{1}{1-c}, 0\le y \le \frac{1}{1-c}, 0\le z \le \frac{1}{1-c}\}\\
&C_6:=\{(x, y, z)\in C_c|-\frac{1}{1-c}\le x \le 0, -\frac{1}{1-c}\le y \le 0, 0\le z \le \frac{1}{1-c}\}\\
&C_7:=\{(x, y, z)\in C_c|-\frac{1}{1-c}\le x \le 0, 0\le y \le \frac{1}{1-c}, -\frac{1}{1-c}\le z \le 0\}\\
&C_8:=\{(x, y, z)\in C_c|0\le x \le \frac{1}{1-c}, -\frac{1}{1-c}\le y \le 0, -\frac{1}{1-c}\le z \le 0\}
\end{align*}
Note that $\bigcup_{i=1}^8C_i=C_c$. Since $V^{\prime}(c)$ is equal to 
\begin{align*}\{
&(\frac{1}{1-c},\frac{-1}{1+c},\frac{1}{1-c}),(\frac{1}{1+c},\frac{-1}{1-c},\frac{1}{1-c}),(\frac{1}{1-c},\frac{-1}{1-c},\frac{1}{1+c}),\\
&(\frac{-1}{1-c},\frac{1}{1+c},\frac{1}{1-c}),(\frac{-1}{1+c},\frac{1}{1-c},\frac{1}{1-c}),(\frac{-1}{1-c},\frac{1}{1-c},\frac{1}{1+c}),\\
&(\frac{-1}{1-c},\frac{-1}{1+c},\frac{-1}{1-c}),(\frac{-1}{1-c},\frac{-1}{1-c},\frac{-1}{1+c}),(\frac{-1}{1+c},\frac{-1}{1-c},\frac{-1}{1-c}),\\
&(\frac{1}{1-c},\frac{1}{1+c},\frac{-1}{1-c}),(\frac{1}{1+c},\frac{1}{1-c},\frac{-1}{1-c}),(\frac{1}{1-c},\frac{1}{1-c},\frac{-1}{1+c})\},
\end{align*} 
$V^{\prime}(c)\subset \bigcup_{i=1}^4 C_i\backslash {\bigcup_{i=5}^8 C_i}$. Since $Q^{\prime}\in SO(3)\backslash \mpt$, $Q^{\prime}\bigcup_{i=1}^4 C_i=\bigcup_{i=5}^8 C_i$. But this contradicts $Q^{\prime}V^{\prime}(c)=V^{\prime}(c)\subset \bigcup_{i=1}^4 C_i$.
\end{enumerate}
Hence we have proved our theorem.
\end{proof}
\begin{figure}[h]
  \begin{center}
    \begin{tabular}{c}

      \begin{minipage}{0.4\hsize}
        \begin{center}
          \includegraphics[clip, width=5cm]{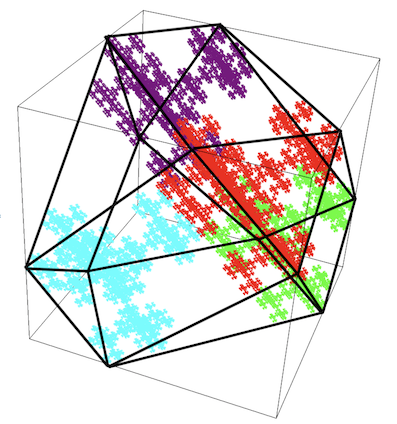}
         \caption{The convex hull of $O(1/2)$}
     \label{a111}

        \end{center}
      \end{minipage}
    \end{tabular}

  \end{center}
\end{figure}
It is natural to investigate the rotational symmetry of $A(c,P)$, where $P\in SO(3)\backslash \mpo$.  But it is difficult to investigate when $c$ is large. We can get the following Theorem.
\begin{thm}
Let $c\in(0,\frac{\sqrt{2}}{\sqrt{2}+\sqrt{3}}]$ and $P\in{SO(3)}\backslash{\mathcal{P}_6}$, then $symA(c,P)\neq{\mathcal{P}_4}$.
\end{thm}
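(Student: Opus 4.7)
The plan is to prove the contrapositive: assume $symA(c,P)=\mathcal{P}_4$ and derive $P\in\mathcal{P}_6$. Following the spirit of \cite{Fal2}, the strategy is to use the smallness of $c$ to force each $Q\in\mathcal{P}_4$ to permute the pieces $A_i$ rigidly, producing an algebraic constraint which lands $P$ in the normalizer of $\mathcal{P}_4$ in $SO(3)$.

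First I would set up geometric separation of the pieces. A routine forward-invariance check shows that the closed ball $\overline{B(0,R)}$ with $R=\sqrt{3}/(1-c)$ satisfies $f_i(\overline{B(0,R)})\subset\overline{B(0,R)}$ (since $|v_i|+cR=R$), so $A\subset\overline{B(0,R)}$ and each piece $A_i=cPA+v_i$ lies in $\overline{B(v_i,r)}$ where $r:=cR=c\sqrt{3}/(1-c)$. The hypothesis $c\le\sqrt{2}/(\sqrt{2}+\sqrt{3})$ is equivalent to $r\le\sqrt{2}$, which is exactly half of the pairwise vertex distance $|v_i-v_j|=2\sqrt{2}$; hence the four closed balls $\overline{B(v_i,r)}$ have pairwise disjoint interiors (and are at worst mutually tangent at the endpoint).

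Next, for any $Q\in\mathcal{P}_4$ I would write $Qv_i=v_{\sigma(i)}$ (the permutation of tetrahedron vertices induced by $Q$), so that $QA_i=cQPA+v_{\sigma(i)}\subset\overline{B(v_{\sigma(i)},r)}$. Using $QA=A$ and reindexing the union gives
\begin{align*}
\bigcup_j (cQPA+v_j)\;=\;A\;=\;\bigcup_j (cPA+v_j).
\end{align*}
Intersecting both sides with the open ball $B(v_j,r)$ isolates a single summand in each decomposition (by the separation above), producing $(cQPA+v_j)\cap B(v_j,r)=(cPA+v_j)\cap B(v_j,r)$; since both sets are closed subsets of $\overline{B(v_j,r)}$, taking closures should upgrade this to $cQPA+v_j=cPA+v_j$, i.e.\ $P^{-1}QP\in symA(c,P)=\mathcal{P}_4$. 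Running $Q$ over all of $\mathcal{P}_4$ and counting gives $P^{-1}\mathcal{P}_4 P=\mathcal{P}_4$, i.e.\ $P\in N_{SO(3)}(\mathcal{P}_4)$.

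Finally, I would invoke the classical identification $N_{SO(3)}(\mathcal{P}_4)=\mathcal{P}_6$: any rotation normalizing $\mathcal{P}_4$ must permute its four $3$-fold rotation axes, each of which is spanned by a pair $\{v_i,-v_i\}$; since $\{\pm v_1,\pm v_2,\pm v_3,\pm v_4\}$ is exactly the vertex set of $C_0$, such a rotation preserves $C_0$ and hence lies in $\mathcal{P}_6$. This contradicts $P\in SO(3)\setminus\mathcal{P}_6$. The main obstacle I anticipate is the endpoint $c=\sqrt{2}/(\sqrt{2}+\sqrt{3})$, where the balls $\overline{B(v_j,r)}$ are only mutually tangent rather than strictly disjoint; in principle, a boundary point of one piece could coincide with a tangent point $(v_i+v_j)/2$ and not be recoverable from the open-ball intersection, so upgrading the open-ball equality to the full piece equality will require a small direct verification — for instance, checking that if such a tangent point lay in $(cPA+v_j)\setminus(cQPA+v_j)$ it would be forced to lie in $cQPA+v_i$ and create a contradiction with the known location of the adjacent piece.
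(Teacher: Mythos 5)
Your strategy matches the paper's almost exactly: separate the pieces $A_i$ inside disjoint open balls $B(v_i,r)$ with $r=c\sqrt{3}/(1-c)$, use $QA=A$ to identify the corresponding pieces in the two decompositions, deduce $P^{-1}QP\in symA(c,P)=\mathcal{P}_4$, and conclude that $P$ normalizes $\mathcal{P}_4$. Your finishing step is actually cleaner than the paper's: you invoke $N_{SO(3)}(\mathcal{P}_4)=\mathcal{P}_6$ via the induced permutation of the four $3$-fold axes spanned by $\pm v_i$, whereas the paper reaches the same contradiction by a somewhat more roundabout comparison of $symT_0$ with $symPT_0$. Both finishes are correct.

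There is, however, a genuine gap at the sentence ``since both sets are closed subsets of $\overline{B(v_j,r)}$, taking closures should upgrade this to $cQPA+v_j=cPA+v_j$.'' The sets $(cQPA+v_j)\cap B(v_j,r)$ and $(cPA+v_j)\cap B(v_j,r)$ are intersections with an \emph{open} ball and need not be closed; taking closures gives only $\overline{(cQPA+v_j)\cap B_j}=\overline{(cPA+v_j)\cap B_j}$, and to pass to equality of the pieces themselves you need the lemma $\overline{A_j\cap B_j}=A_j$, equivalently $\overline{A\cap B(0,R)}=A$. You diagnose the difficulty as a tangency phenomenon confined to the endpoint $c=\sqrt{2}/(\sqrt{2}+\sqrt{3})$, but that misses the real issue: a piece $A_j$ can meet $\partial B_j$ for \emph{any} $c$ in the range. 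For example, if $P$ is a rotation about the axis through $v_j$ (say by $\pi/2$, so $P\notin\mathcal{P}_6$), then $Pv_j=v_j$, the fixed point of $f_j$ is $v_j/(1-c)$, and $|v_j/(1-c)-v_j|=c\sqrt{3}/(1-c)=r$, so this fixed point lies in $A_j\cap\partial B_j$. Thus $A_j\cap B_j$ can be a proper subset of $A_j$ even when the balls are far from tangent, and the ``small direct verification'' you sketch, aimed only at the tangent midpoint $(v_i+v_j)/2$, would not close the gap. The paper proves $\overline{A\cap B}=A$ directly: first $A\cap B\neq\emptyset$ (if $A\subset\partial B$, pick distinct $a_1,a_2\in A$; then $f_1(a_1),f_1(a_2)\in A_1\cap\partial B\subset\overline{B_1}\cap\partial B$, which is a singleton since $\overline{B_1}$ is internally tangent to $\overline{B}$, contradicting injectivity of $f_1$), and then for any $v\in A\cap B$ the orbit $\{f_\omega(v)\}$ over finite words $\omega$ stays in $A\cap B$ and is dense in $A$, so $A\subset\overline{A\cap B}$. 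You need this lemma, or an equivalent, to make your intersection step rigorous.
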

\begin{proof}
We set $B=B(0,\frac{\sqrt{3}}{1-c})$ and $B_i:=f_i(B)=B(v_i,\frac{c\sqrt{3}}{1-c})$. Since $|v_i|+\frac{c\sqrt{3}}{1-c}=\sqrt{3}+\frac{c\sqrt{3}}{1-c}=\frac{\sqrt{3}}{1-c}$, $\overline{B_i}\subset{\overline{B}}$. Since $|v_i-v_j|=2\sqrt{2}\ge \frac{2\cdot c\sqrt{3}}{1-c}$, we have for all $i\neq{j}$, $B_i\cap{B_j}={\emptyset}$. Note that $A_i\subset{\overline{B_i}}$, where $A_i:=f_i(A)$.

For each $Q\in{\mathcal{P}_4}$, $Q$ permutes $\{B_i\}_{i=1}^4$. Hence $Q$ induces a permutation of the indices $\{1,2,3,4\}$. Let $\ast: \mpt\rightarrow S_4$ be the mapping defined by $Q(B_i)=B_{Q^{\ast}i}$ for each $i$. Note that $\ast: \mpt\rightarrow S_4$ is a group homomorphism, where $S_4$ denotes the symmetric group of all permutations of $\{1,2,3,4\}$.

Suppose $symA=\mathcal{P}_4$. For each $Q\in{\mathcal{P}_4}$,
\begin{align*}
QA=A
&\Rightarrow{Q(\bigcup_{i=1}^4{A_i})=\bigcup_{i=1}^4A_i}\\
&\Rightarrow{Q(\bigcup_{i=1}^4{A_i})\cap{QB_i}=\bigcup_{i=1}^4{A_i}\cap{QB_i}}(\mbox{for\ all\ }i=1,2,3,4)\\
&\Rightarrow{Q(A_i\cap{B_i})=A_{Q^{\ast}i}\cap{B_{Q^{\ast}i}}}(\mbox{for\ all\ }i=1,2,3,4)\\
&\Rightarrow{QA_i=A_{Q^{\ast}i}}(\mbox{for\ all\ }i=1,2,3,4)\\
&\Rightarrow{{f_{Q^{\ast}i}}^{-1}Qf_i\in{{\rm sym}(A)}=\mathcal{P}_4}.
\end{align*}

Where, we use $\overline{A\cap{B}}=A$ to imply the forth $\Rightarrow$. 

We prove $\overline{A\cap{B}}=A$. 

First, we prove $A\cap B\neq{\emptyset}$. Suppose that $A\cap B={\emptyset}$. Since $A\subset \overline{B}$, we have that $A\subset \partial B$. Since $\#A\ge2$, there exits $a_1\neq a_2\in A(\subset \partial B)$. Then we have that $f_1(a_1),f_1(a_2)\in A_1\subset{\overline{B_1}}$. Since $\overline{B_1}\subset{\overline{B}}$ , $\#\overline{B_1}\cap \partial{B}=1$ and $f_1(a_1),f_1(a_2)\in \partial B$, we have that$f_1(a_1)=f_1(a_2)$. But this contradicts that $f_1$ is injective. Hence we have that $A\cap B\neq{\emptyset}$.

Hence there exists $v\in{A\cap B}$. Since for each $\omega=\omega_1\omega_2\cdot\cdot\cdot\omega_n$, $f_{\omega}(A)\subset{A}$ and $f_{\omega}(B)\subset{B}$, we have that $f_{\omega}(v)\in{A\cap B}$. Hence $\bigcup_{n=1}^{\infty}\bigcup_{\omega\in I^n}f_{\omega}(v)\subset{A\cap B}$. Furthermore, it is well known that $A\subset\overline{\bigcup_{n=1}^{\infty}\bigcup_{\omega\in I^n}\{f_{\omega}(v)\}}$. Hence we have that $A\subset \overline{A\cap{B}}$. Thus we have that $\overline{A\cap{B}}=A$.

Since ${f_{Q^{\ast}i}}^{-1}Qf_i(x)=P^{-1}QP(x)$ and ${f_{Q^{\ast}i}}^{-1}Qf_i\in{{\rm sym}(A)}=\mathcal{P}_4$, there exists $Q^{\prime}\in{\mathcal{P}_4}$ such that $QP=PQ^{\prime}$.

For each $i=1,2,3,4$, let $l_i$ be the line containing the origin and $v_i$. Since $P\notin{\mathcal{P}_6}$, we have that $\{l_1,l_2,l_3,l_4\}\neq \{Pl_1, Pl_2, Pl_3, Pl_4\}$, and hence $symT_0\neq symPT_0$. whereas since $Q(PT_0)=QP(T_0)=PQ^{\prime}(T_0)=PT_0$, we have that $Q\in symPT_0$. Hence $symT_0\subset symPT_0$. Since $\#symPT_0=\#symT_0=12$, we have that $symT_0=symPT_0$, but this contradicts $symT_0\neq \rm symPT_0$. Hence we have proved our theorem.
\end{proof}
Let $L$ be a unique line containing the origin and $(0,0,1)$. Next, we consider the connectedness of $A(c,P)$, where $c\in(0,1)$ and $P$ is the rotation matrix around $L$.  
We set $\Delta=\{v_i-v_j|i,j\in\{1,2,3,4\}\}$. 
\begin{lem}
\label{conn}
Let $c\in(0,1)$ and let $P$ be a rotation matrix around $L$. Then $A(c,P)$ is connected if and only if ${A(c,P)_1\cap{A(c,P)_3}\neq{\emptyset}}$.
\end{lem}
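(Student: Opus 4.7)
The proof uses the rotation $Q$ by angle $\pi$ about $L$. Since $P$ is also a rotation about $L$, $Q$ commutes with $P$, and $Q$ permutes the vertices via $\sigma=(1\,2)(3\,4)$. Conjugation of the IFS by $Q$ simply relabels it, so $QA=A$ and $Q(A_i)=A_{\sigma(i)}$; in particular,
\begin{equation*}
A_1\cap A_3\neq\emptyset \iff A_2\cap A_4\neq\emptyset, \qquad A_1\cap A_4\neq\emptyset \iff A_2\cap A_3\neq\emptyset.
\end{equation*}
I will also use the standard fact (Hata) that $A$ is connected if and only if the intersection graph on $\{1,2,3,4\}$, with edges $\{i,j\}$ whenever $A_i\cap A_j\neq\emptyset$, is connected.

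The case $c<1/2$ is handled directly. Since $P$ preserves the $z$-coordinate and $z(v_1)=z(v_2)=1$, $z(v_3)=z(v_4)=-1$, one checks that $z(A_1\cup A_2)\subseteq[(1-2c)/(1-c),1/(1-c)]\subset(0,\infty)$ while $z(A_3\cup A_4)\subset(-\infty,0)$. Hence $A$ is disconnected and $A_1\cap A_3=\emptyset$; both sides of the equivalence are false.

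For $c\ge 1/2$, the plan is to reduce everything to the cross-level equivalence
\begin{equation*}
A_1\cap A_3\neq\emptyset \iff A_1\cap A_4\neq\emptyset.
\end{equation*}
Granting this, the backward direction is immediate: all four of $A_1\cap A_3$, $A_2\cap A_4$, $A_1\cap A_4$, $A_2\cap A_3$ are nonempty, so the intersection graph contains the Hamiltonian cycle through $1,3,2,4$ and is in particular connected, whence $A$ is connected. For the forward direction I argue by contrapositive: if $A_1\cap A_3=\emptyset$ then by the equivalence and by $Q$-symmetry all four cross-level intersections vanish, so $A=(A_1\cup A_2)\sqcup(A_3\cup A_4)$ is a disjoint union of two nonempty closed sets and $A$ is disconnected.

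The main obstacle is therefore this cross-level equivalence. The vectors $v_3-v_1=(-2,0,-2)$ and $v_4-v_1=(0,2,-2)$ have the same $z$-component and differ only by a $\pi/2$-rotation about $L$ in their $xy$-parts, yet $A$ generically has only the $Q$-symmetry about $L$ (not a $\pi/2$-symmetry), so the equivalence cannot be read off from a single symmetry of $A$. The plan is to use the reformulation $A_i\cap A_j\neq\emptyset \iff v_j-v_i\in cP(A-A)$ (an immediate consequence of Lemma~\ref{trns}), combined with an analysis of the horizontal slices of $A-A$ at its extreme $z$-values and exploitation of the $Q$-invariance and self-similar structure of $A-A$, to deduce the cross-level equivalence.
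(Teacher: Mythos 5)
There is a genuine gap: the cross-level equivalence $A_1\cap A_3\neq\emptyset\iff A_1\cap A_4\neq\emptyset$, which you yourself identify as ``the main obstacle,'' is exactly the heart of the lemma, and you do not prove it --- you only announce a plan (``analysis of the horizontal slices of $A-A$ at its extreme $z$-values''), which is not carried out and is not obviously workable. Everything else in your proposal (the $\pi$-rotation symmetry, the $c<1/2$ separation by the plane $z=0$, the reduction of connectedness to connectedness of the intersection graph via Hata) is correct but routine; without the cross-level step the forward direction collapses (the two edges $\{1,3\}$ and $\{2,4\}$ alone do not make the graph connected) and so does the backward direction.

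The step can be closed by a short argument that you half-anticipate but dismiss too quickly. You are right that no \emph{symmetry of $A$} carries $A_3$ to $A_4$ in general; but one does not need a symmetry of $A$, only a symmetry of the address-level equation characterizing intersections. Write $A_1\cap A_3\neq\emptyset$ as: there exist $d_i\in\Delta=\{v_k-v_l\}$ with $v_1-v_3+\sum_{i\ge1}(cP)^i d_i=0$. Now let $Q'$ be the $-\pi/2$ rotation about $L$. Then (a) $Q'$ commutes with $P$, since both are rotations about $L$; (b) $Q'(v_1-v_3)=Q'(2,0,2)=(0,-2,2)=v_1-v_4$; (c) $Q'\Delta=\Delta$, because $\Delta\setminus\{0\}$ is the set of twelve vectors with one zero coordinate and the other two equal to $\pm2$ (the vertices of a cuboctahedron), which is invariant under the full hexahedral group even though the vertex set $V$ itself is not. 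Applying $Q'$ to the displayed equation yields $v_1-v_4+\sum_{i\ge1}(cP)^i(Q'd_i)=0$ with $Q'd_i\in\Delta$, which witnesses $A_1\cap A_4\neq\emptyset$; iterating $Q'$ cycles through $v_1-v_3\mapsto v_1-v_4\mapsto v_2-v_4\mapsto v_2-v_3\mapsto v_1-v_3$ and gives all four equivalences at once. With this in hand your $\pi$-rotation observations and the separate treatment of $c<1/2$ become unnecessary, and the rest of your outline goes through.
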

\begin{proof}
We set $A=A(c,P)$. First, we show 
\begin{align}
A_1\cap{A_3}\neq{\emptyset}\Leftrightarrow{A_1\cap{A_4}\neq{\emptyset}}\Leftrightarrow{A_2\cap{A_4}\neq{\emptyset}}\Leftrightarrow{A_2\cap{A_3}\neq{\emptyset}}
\end{align}
Suppose $A_1\cap{A_3}\neq{\emptyset}$. Then there exists $\{d_i\}_{i=1}^{\infty}\in{\Delta}^{\infty}$ such that 
\begin{align*}
v_1-v_3+\displaystyle\sum_{i=1}^{\infty}{(cP)^{i}d_i}=0.
\end{align*}
 Let $Q$ be the $-\pi/2$ rotation matrix around $L$. We have  
\begin{align*}
0
&=Q(v_1-v_3+\displaystyle\sum_{i=1}^{\infty}{(cP)^{i}d_i})\\
&=Q(v_1-v_3)+Q(\displaystyle\sum_{i=1}^{\infty}{(cP)^{i}d_i})\\
&=v_1-v_4+\displaystyle\sum_{i=1}^{\infty}{Q(cP)^{i}d_i}\\
&=v_1-v_4+\displaystyle\sum_{i=1}^{\infty}{(cP)^{i}Qd_i}.
\end{align*}

Furthermore, ${Q\Delta}\subset{\Delta}$. Hence we have that $A_1\cap{A_4}\neq{\emptyset}$.

By using the same method above, we can show that $A_1\cap{A_4}\neq{\emptyset}$ implies $A_2\cap{A_4}\neq{\emptyset}$, $A_2\cap{A_4}\neq{\emptyset}$ implies $A_2\cap{A_3}\neq{\emptyset}$ and $A_2\cap{A_3}\neq{\emptyset}$ implies $A_1\cap{A_3}\neq{\emptyset}$.  Hence we have proved (1). 

Suppose $A(c,P)_1\cap A(c,P)_3\neq \emptyset$. We show that $A$ is connected. By\cite{Hata}, \cite{kigami}, it suffices to show that for all $(i,j)\in \{1,2,3,4\}^2$,
\begin{align}
\mbox{there\ exist\ } n_1,...,n_k\in \{1,2,3,4\}\ \mbox{such that}\ n_1=i, n_k=j\ \mbox{and}\ {A_{n_l}}\cap{A_{n_{l+1}}}\neq{\emptyset}\ (l=1,...,k).
\end{align}
By (4), we can show that for each $(i,j)\in \{1,2,3,4\}^2$, (5) holds. Hence $A$ is connected.

Finally, suppose that $A$ is connected and $A_1\cap{A_3}={\emptyset}$. By $\cite{Hata}, \cite{kigami}$, (5) holds in the case of $(i,j)=(1,3)$. But this contradicts (4). Hence we have proved our lemma.
\end{proof}
\begin{cor}
Let $c\in [1/2,1)$ and $P\in \mpo$. Then $A(c,P)$ is connected.
\end{cor}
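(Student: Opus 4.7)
The plan is to reduce to two explicit rotations and invoke Lemma~\ref{conn}. By Proposition 4.1, the family $\{A(c,P):P\in\mathcal{P}_6\}$ has cardinality two, with representatives $T(c)=A(c,E)$ and $O(c)=A(c,P_0)$ for any choice of $P_0\in\mathcal{P}_6\setminus\mathcal{P}_4$. Choosing $P_0$ to be the $\pi/2$ rotation around $L$ makes both $E$ and $P_0$ rotations around $L$, so Lemma~\ref{conn} applies in either case and the corollary reduces to verifying $A_1\cap A_3\ne\emptyset$ for both $P=E$ and $P=P_0$.

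By Remark~\ref{ad}, $A_1\cap A_3\ne\emptyset$ is equivalent to finding a sequence $(d_k)_{k\ge 0}$ in $\Delta:=V-V$ with $\sum_{k\ge 0}c^kP^kd_k=\frac{1}{c}P^{-1}(v_3-v_1)$. For $P=E$ the right-hand side is $(v_3-v_1)/c$, so restricting each $d_k$ to $\{-(v_3-v_1),0,v_3-v_1\}\subset\Delta$ collapses the vector equation to the scalar equation $\sum_{k\ge 0}c^k\epsilon_k=1/c$ with $\epsilon_k\in\{-1,0,1\}$. For $c\ge 1/3$ the attractor of the IFS $\{x\mapsto cx+\epsilon\}_{\epsilon\in\{-1,0,1\}}$ equals the full interval $[-1/(1-c),1/(1-c)]$, and $1/c$ lies in this interval exactly when $c\ge 1/2$, so the required sequence exists.

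For $P=P_0$ the first step is to compute $P_0^{-1}(v_3-v_1)=v_4-v_1$. The key observation is that the full $P_0$-orbit of $v_4-v_1$ remains inside $\Delta$: since $P_0$ fixes the $z$-axis and $v_4-v_1$ has $z$-coordinate $-2$, the orbit lies among the four elements of $\Delta$ with $z$-coordinate $-2$ (namely $v_3-v_1$, $v_3-v_2$, $v_4-v_1$, $v_4-v_2$), which are cyclically permuted by $P_0$. Consequently, setting $d_k=\epsilon_k\,P_0^{-k}(v_4-v_1)\in\Delta$ with $\epsilon_k\in\{-1,0,1\}$ again collapses the problem to $\sum c^k\epsilon_k=1/c$, and the same one-dimensional argument finishes the $P=P_0$ case.

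The main obstacle is precisely this orbit computation in the $P=P_0$ case: a priori the rotation could mix $\Delta$ with $V'-V'=P_0\Delta$ and force extra components outside $\Delta$, but the geometric coincidence $v_4-v_1\in\Delta\cap(V'-V')$ prevents this and allows the reduction to parallel the $T(c)$ argument.
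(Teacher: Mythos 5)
Your proof is correct and follows essentially the same route as the paper: reduce to the representatives $T(c)=A(c,E)$ and $O(c)=A(c,P_0)$ with $P_0$ the $\pi/2$ rotation about $L$, apply Lemma \ref{conn}, solve the resulting one-dimensional overlapping-IFS equation $\sum c^k\epsilon_k=1/c$ for $c\ge 1/2$, and transfer to $P_0$ by observing that the relevant $P_0$-orbit of difference vectors stays inside $\Delta$ (the paper phrases this as replacing $d_i$ by $P^{-i}d_i$). The only cosmetic differences are your use of the three-map IFS $\{cx+\epsilon\}_{\epsilon\in\{-1,0,1\}}$ in place of the paper's two-map segment argument, and your explicit orbit computation, which if anything makes the transfer step cleaner than the paper's.
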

\begin{proof}
Let $c\in [1/2,1)$ and $E$ be the identity matrix. Then $A(c,E)=T(c)$. Since the attractor of the IFS on $\{cx+v_1,cx+v_3\}$ on $\mR$ is a line segment, there exists $\{d_i\}_{i=1}^{\infty}(d_i=v_1-v_3\ or\ v_3-v_1\ or\ 0)$ such that 
\begin{align*}
v_1-v_3+\displaystyle\sum_{i=1}^{\infty}{c^{i}d_i}=0.
\end{align*}
By Lemma \ref{conn}, $T(c)$ is connected. Let $P$ be the $\pi/2$ rotation matrix around $L$. Note that $P\in {{\mathcal{P}_6}\backslash{\mathcal{P}_4}}$. For all $i=1,2,3,...$, we set $d^{\prime}_i=P^{-1}d_i\in \Delta$. Then we have 
\begin{align*}v_1-v_3+\displaystyle\sum_{i=1}^{\infty}{(cP)^{i}d^{\prime}_i}&=v_1-v_3+\displaystyle\sum_{i=1}^{\infty}{c^{i}d_i}\\
&=0.
\end{align*}
Hence $O(c)$ is connected.
Hence we have proved our corollary. 
\end{proof}
　

\noindent Yuto Nakajima\\
E-mail: nakajima.yuuto.32n@st.kyoto-u.ac.jp

\end{document}